
\documentclass[11pt]{amsart}
\usepackage{amsfonts}
\usepackage{amssymb}
\usepackage{amsmath}
\usepackage{amsthm}
\usepackage[numbers]{natbib}
\usepackage{graphicx}
\usepackage{enumitem}
\usepackage{braket}
\usepackage[pdftex,plainpages=false,colorlinks,hyperindex,bookmarksopen,linkcolor=red,citecolor=blue,urlcolor=blue]{hyperref}
\usepackage{mathrsfs}
\usepackage{xcolor}
\usepackage{epstopdf}
\usepackage{braket}
\usepackage{color}

\setcounter{MaxMatrixCols}{10}

\setlength{\textwidth}{\paperwidth}
\addtolength{\textwidth}{-2in}
\setlength{\textheight}{\paperheight}
\addtolength{\textheight}{-2.5in}
\setlength{\headsep}{0.7in}
\calclayout
\makeatletter
\def\author@andify{  \nxandlist {\unskip ,\penalty-1 \space\ignorespaces}    {\unskip {} \@@and~}    {\unskip \penalty-2 \space \@@and~}}
\makeatother
\pdfoutput=1
\bibpunct{[}{]}{;}{n}{,}{,}
\newtheorem{theorem}{Theorem}[section]

\newtheorem{coro}{Corollary}[section]
\newtheorem{definition}{Definition}[section]
\theoremstyle{remark}
\newtheorem{remark}{Remark}[section]

\newtheorem{lem}{Lemma}[section]

\newcommand{\be}{\begin{eqnarray}}
\newcommand{\ee}{\end{eqnarray}}

\newcommand{\C}{\mathbb{C}}

\newcommand{\func}[1]{\operatorname{#1}}

\numberwithin{equation}{section}
\allowdisplaybreaks

\begin{document}
\title{Non-Gaussian measures in infinite dimensional spaces: the Gamma-grey noise}

\author[Beghin]{Luisa Beghin$^1$}
\address{${}^1$ Sapienza University of Rome. P.le Aldo Moro, 5, Rome, Italy}
\email{luisa.beghin@uniroma1.it}

\author[Cristofaro]{Lorenzo Cristofaro$^2$}
\address{${}^2$ Sapienza University of Rome. P.le Aldo Moro, 5, Rome, Italy}
\email{lorenzo.cristofaro@uniroma1.it}

\author[Gajda]{Janusz Gajda$^3$}
\address{${}^3$ University of Warsaw. Dluga 44/50 00-241, Warsaw, Poland}
\email{jgajda@wne.uw.edu.pl}

\keywords{Incomplete gamma function, Completely monotone functions, Grey
noise, Hitting times, Fractional Brownian motion, Elliptically contoured
measures}
\subjclass[2020]{Primary 60G20; Secondary 60G22, 33B20, 60H40.}
\date{\today }

\begin{abstract}
In the context of non-Gaussian analysis, Schneider \cite{SCH} introduced
grey noise measures, built upon Mittag-Leffler functions;
analogously, grey Brownian motion and its generalizations were constructed
(see, for example, \cite{MUR2}, \cite{BOC}, \cite{DAS2}, \cite{DAS}). In this paper, we construct and
study a new non-Gaussian measure, by means of the incomplete-gamma function
(exploiting its complete monotonicity). We label this measure Gamma-grey noise and we prove, for it,
the existence of Appell system. The related generalized processes, in the infinite dimensional setting, 
are also defined and, through the use of the Riemann-Liouville fractional operators, the (possibly tempered)
Gamma-grey Brownian motion is consequently introduced.
A number of different characterizations of these processes are also provided, together with
the integro-differential equation satisfied by their transition densities.
They allow to model anomalous diffusions, mimicking the
procedures of classical stochastic calculus. 
\end{abstract}

\maketitle

\bigskip

\section{Introduction}

Non-Gaussian analysis has been introduced in the Nineties (see, for example,
\cite{ALB}, \cite{ALB2}, \cite{BER}), in order to extend the standard
infinite-dimensional (or white noise) constructions; see also \cite{OKS}. In particular, grey
noise has been defined for the first time by Schneider in \cite{SCH},
exploiting the complete monotonicity property of the Mittag-Leffler
function. Consequently, grey Brownian motion was also introduced in the same
paper and studied in \cite{SCH2}, allowing to model anomalous diffusions by
mimicking the classical procedures. These models represent a family of
(self-similar) stochastic processes, with stationary increments, which
includes, as special cases, both standard and fractional Brownian motion.
\newline
A further generalization (generalized grey Brownian motion, hereafter ggBm)
in due to \cite{MUR2}; it is also proved in \cite{MUR} that its marginal
density function is the fundamental solution of a stretched time-fractional
master equation.\newline
The ggBm, denoted by $B_{\alpha }^{\beta }:=\{B_{\alpha }^{\beta }(t),t>0\}$%
, for any $\alpha ,\beta \in (0,1]$, is characterized by the following $n$
-times characteristic function: for $\xi _{j}\in \mathbb{R},\text{ }%
j=1,...,n,$ and $\text{ }0\leq t_{1}\leq ...\leq t_{n}<\infty $
\begin{equation}
\mathbb{E}e^{i\sum_{j=1}^{n}\xi _{j}B_{\alpha }^{\beta }(t_{j})}=E_{\beta
}\left( -\frac{1}{2}\sum_{j,k=1}^{n}\xi _{j}\xi _{k}\gamma _{\alpha
}(t_{j},t_{k})\right) ,  \label{gbm}
\end{equation}%
where $\gamma _{\alpha }(t_{j},t_{k}):=t_{k}^{\alpha }+t_{j}^{\alpha
}-|t_{k}-t_{j}|^{\alpha }$ and $E_{\beta }\left( x\right) $ is the
Mittag-Leffler function $E_{\beta }\left( x\right): =\sum_{j=0}^{\infty
}x^{j}/\Gamma (\beta j+1),$ $x\in \mathbb{R}$ (see Appendix A, for details
on the Mittag-Leffler function in a more general definition). \newline
The link with Ornstein-Uhlenbeck process is explored, by means of the
stochastic calculus tools, in \cite{BOC}; this is made possible by the
representation of ggBm as a product of a fractional Brownian motion and an
independent random variable (with distribution depending on $\beta ).$

It is easy to see from (\ref{gbm}) that, for $\beta =1,$ the process $%
B_{\alpha }^{\beta }$ reduces to fractional Brownian motion with Hurst
parameter $H=\alpha /2$; for $\alpha =\beta ,$ it is called grey Brownian
motion (see \cite{SCH}); on the other hand, for $\alpha =\beta =1,$ it
coincides with standard Brownian motion. A slightly different construction
of the process, by means of the so-called Mittag-Leffler analysis, can be
found in \cite{GRO} and \cite{GRO2}. Finally, stochastic differential
equations driven by ggBm are studied in \cite{DAS}.

Our aim in this paper is to define, analogously to ggBm, another class of
processes that includes, as special cases both standard and fractional
Brownian motion. Our starting point is a result proved in \cite{BEG}, i.e.
that the upper incomplete gamma function $\Gamma (\rho
,x):=\int_{x}^{+\infty }e^{-w}w^{\rho -1}dw$ is completely monotone and that
the inverse Laplace transform of
\begin{equation}
\mathbb{\varphi }\mathbb{(\eta )=}\Gamma (\rho ,\eta ),\qquad \eta \geq 0,
\label{ll2}
\end{equation}%
reads%
\begin{equation}
f_{\rho }(y):=\mathcal{L}^{-1}\left\{ \mathbb{\varphi }\mathbb{(\cdot )}%
;y\right\} =1_{y>1}{\LARGE G}_{1,1}^{1,0}\left[ \left. \frac{1}{y}%
\right\vert
\begin{array}{c}
2 \\
1+\rho%
\end{array}%
\right] ,\qquad \rho \in (0,1],  \label{kl}
\end{equation}%
where ${\LARGE G}_{p,q}^{m,n}\left[ \left. \cdot \right\vert \;\right] $ is
the Meijer G-function (see (\ref{mei}), in Appendix A).

Moreover, (\ref{kl}) is a proper density function, up to the constant $%
1/\Gamma (\rho )$. We introduce here a tempering factor $\theta $, for $%
\theta \geq 0$, i.e. we will refer to $\Gamma (\rho ,\theta +\eta )$, for $%
\eta \geq 0$; the tempering is necessary in order to ensure finite moments
to the corresponding measure. The complete monotonicity of $\Gamma (\rho
,\theta +\cdot )$ easily follows. Once normalized by $\Gamma (\rho ,\theta )$%
, it will be used to define the characteristic functional of a measure, that
we will call $\Gamma $-grey measure.

In Section 2 we define the $\Gamma $-grey measure both on the finite and
infinite dimensional spaces, computing its moments and discussing the
existence of the Appell system \cite{KON1}. These steps are necessary in
order to extend the non-Gaussian analysis to the $\Gamma $-grey noise space
and require some well-known preliminary results on complexification and
holomorphic property in infinite dimensional spaces, that we present in the
Appendix (together with some formulae on special functions).

On the $\Gamma $-grey noise space, in Section 3, we define the tempered $%
\Gamma $-grey Brownian motion $B_{\alpha ,\rho }^{\theta }:=\{B_{\alpha
,\rho }^{\theta }(t),t>0\}$, for any $\alpha ,\rho \in (0,1],$ $\theta \geq
0,$ as generalized process, by means of the fractional operator $%
M_{-}^{\alpha /2}$, defined below (in terms of Riemann-Liouville derivative
or integral, depending on the values of $\alpha $). The tempering parameter $%
\theta $ is introduced in order to ensure finiteness of moments, while the
parameter $\rho $ (of the upper-incomplete gamma function) represents the
``distance" from the white noise setting: for $\rho =1$ (for any $\theta )$,
the process $B_{\alpha ,\rho }^{\theta }$ coincides with the fractional
Brownian motion with Hurst parameter $H=\alpha /2$, while, if we also put $%
\alpha =1$, we obtain the standard Brownian motion $B$.\ We prove that, in
the $n$-dimensional space, the tempered $\Gamma $-grey Brownian motion can
be fully characterized as a product of a fractional Brownian motion and an
independent random variable, defined on $[1,+\infty )$ and with distribution
depending on $\rho $ and $\theta .$ This factorization permits us to
interpret the distribution of the process as a Gaussian variance mixture
and, moreover, it is suitable for path-simulating purposes.

In Section 5 we discuss the time-change representation of this process
(which is valid for its one-dimensional distribution and for $\theta =0$),
i.e. the following equality in distribution
\begin{equation}
B_{\alpha ,\rho }(t)\overset{d}{=}B(Y_{\rho }(t^{\alpha })),\qquad t\geq 0.
\label{re}
\end{equation}%
Here we put, for simplicity, $B_{\alpha ,\rho }:=B_{\alpha ,\rho }^{0}$
while $Y_{\rho }:=\left\{ Y_{\rho }(t),t\geq 0\right\} $ is a stochastic
process, independent of the Brownian motion $B$, taking values in $%
[t,+\infty ),$ for any $t.$ Moreover, we derive, in the same
setting, the differential equations satisfied by its
characteristic function and by its transition density. Unlike what
happens in the case of the ggBm, the time-stretching parameter in
(\ref{re}) depends only on $\alpha $, while does not involve $\rho
$.

\section{The Gamma-grey noise}

We define the $\Gamma $-grey noise starting from the $n$-dimensional
Euclidean space, in analogy with the construction of the grey noise (see
\cite{SCH}) and the generalized grey noise (see \cite{MUR2}). In particular,
we will follow the slightly different approach introduced by \cite{GRO}. By
the complete monotonicity of $\Gamma \left( \rho ,\theta + \cdot \right)$,
for $\theta \geq 0$ and applying the Bernstein's theorem, there exists a
unique probability measure $\mu _{\rho ,\theta }$ on $[0,+\infty )$ such
that
\begin{equation}  \label{1dim}
\frac{\Gamma (\rho ,\theta + \eta )}{\Gamma (\rho, \theta )}%
=\int_{0}^{+\infty }e^{-\eta s}d\mu _{\rho ,\theta }(s),\qquad \eta ,\theta
\geq 0.
\end{equation}
Moreover, the mapping
\begin{equation}
\mathbb{R}^{n}\ni \xi \rightarrow \frac{\Gamma (\rho ,\theta + \frac{1}{2}%
\left\langle \xi ,\xi \right\rangle _{euc})}{\Gamma (\rho ,\theta )}\in
\mathbb{R},  \label{cf}
\end{equation}
(where $\left\langle \cdot ,\cdot \right\rangle _{euc}$ denotes the
Euclidean scalar product on $\mathbb{R}^{n})$ is a characteristic function.

\begin{definition}
Let $n \in \mathbb{N}$, $\rho \in (0,1]$ and $\theta>0$. The $n$-dimensional
$\Gamma$-grey measure is the unique probability measure $\nu _{\rho ,\theta
}^{n}$ on $(\mathbb{R}^{n}, \mathcal{B}(\mathbb{R}^{n}))$ that satisfies:
\begin{equation}
\int_{\mathbb{R}^{n}}e^{i\left\langle x,\xi \right\rangle }d\nu _{\rho
,\theta }^{n}(x)=\frac{\Gamma (\rho ,\theta + \frac{1}{2}\left\langle \xi
,\xi \right\rangle _{euc} )}{\Gamma (\rho ,\theta )} ,\qquad \xi \in \mathbb{%
R}^{n}.  \label{yy3}
\end{equation}%
We define $\Phi _{\rho ,\theta }(\xi )$ as its characteristic function and
we call $(\mathbb{R}^{n}, \mathcal{B}(\mathbb{R}^{n}), \nu^n_{\rho,\theta})$
the $n$-dimensional $\Gamma$-grey space.
\end{definition}

\begin{remark}
For $\rho =1$ and for any $\theta$, the measure $\nu _{\rho ,\theta }^{n}$
reduces to the multivariate Gaussian measure (with independent components).
\end{remark}

We now prove that the moments of $\nu _{\rho ,\theta }^{1}$ are finite, so
that we can decompose the space $L^{2}\left( \mathbb{R},\nu _{\rho ,\theta
}^{1}\right) $ through the polynomials $H^{\rho,\theta}_n$. We obtain $%
H^{\rho,\theta}_n$ applying the Gram-Schmidt orthogonalization to the
monomials $x^{n}.$

\begin{lem}
\label{moments} Let $\rho \in (0,1]$ and $\theta >0$. The moments of $\nu
_{\rho ,\theta }^{1}$ are equal to zero, for $k=2n+1$, $n\in \mathbb{N}$ and
\begin{equation}
\int_{\mathbb{R}}x^{k}d\nu _{\rho ,\theta }^{1}(x)=\frac{(-1)^{n+1}2n!
\Gamma (\rho )\theta ^{\rho -n}}{n!2^n\Gamma (\rho ,\theta )}E_{1,\rho
+1-n}^{\rho }\left( -\theta \right) ,\qquad k=2n\text{, }n\in \mathbb{N}.
\label{yy2}
\end{equation}
The first polynomials $H_{n}^{\rho ,\theta },$ $n=0,1,2,3,$ orthogonal in $
L^{2}\left( \mathbb{R},\nu _{\rho ,\theta }^{1}\right) $ and with $\deg
H_{n}^{\rho ,\theta }=n,$ are given by
\begin{eqnarray}
H_{0}^{\rho ,\theta }(x)=1\qquad \qquad &&H_{1}^{\rho ,\theta }(x)=x
\label{pp} \\
H_{2}^{\rho ,\theta }(x) =x^{2}-\frac{\theta ^{\rho -1}e^{-\theta }}{\Gamma
(\rho ,\theta )}\qquad \qquad &&H_{3}^{\rho ,\theta }(x)=x^{3}-3x(1+(1-\rho
)\theta ^{-1}).  \notag
\end{eqnarray}
\end{lem}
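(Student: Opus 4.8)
The plan is to exploit the Gaussian scale-mixture structure hidden in the characteristic function. Reading (\ref{1dim}) with $\eta=\tfrac12\langle\xi,\xi\rangle_{euc}$ and comparing with (\ref{yy3}), I would first record that in one dimension
\[
\Phi_{\rho,\theta}(\xi)=\int_{0}^{+\infty}e^{-\frac{\xi^{2}}{2}s}\,d\mu_{\rho,\theta}(s),
\]
so that $\nu_{\rho,\theta}^{1}$ is a mixture of centred Gaussians $N(0,s)$ with mixing law $\mu_{\rho,\theta}$. Since each Gaussian component is symmetric, all odd moments vanish, and by Fubini the even moment factorises as
\[
\int_{\mathbb{R}}x^{2n}\,d\nu_{\rho,\theta}^{1}(x)=\frac{(2n)!}{2^{n}n!}\,M_{n},\qquad M_{n}:=\int_{0}^{+\infty}s^{n}\,d\mu_{\rho,\theta}(s),
\]
where $\tfrac{(2n)!}{2^{n}n!}$ is the $2n$-th moment of $N(0,1)$. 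Finiteness of every $M_{n}$ --- hence of every moment of $\nu_{\rho,\theta}^{1}$ --- is precisely where the tempering $\theta>0$ is needed: it makes the right-hand side of (\ref{1dim}) a $C^{\infty}$ function of $\eta$ near the origin, whereas at $\theta=0$ already the first derivative $-e^{-\eta}\eta^{\rho-1}$ diverges as $\eta\to0^{+}$ for $\rho<1$.

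Next I would compute $M_{n}$ by differentiating the Laplace transform (\ref{1dim}) under the integral sign, which gives
\[
M_{n}=(-1)^{n}\frac{d^{n}}{d\eta^{n}}\!\left[\frac{\Gamma(\rho,\theta+\eta)}{\Gamma(\rho,\theta)}\right]_{\eta=0}=\frac{(-1)^{n+1}}{\Gamma(\rho,\theta)}\,\frac{d^{\,n-1}}{du^{\,n-1}}\bigl[e^{-u}u^{\rho-1}\bigr]_{u=\theta},
\]
using $\tfrac{d}{du}\Gamma(\rho,u)=-e^{-u}u^{\rho-1}$. The heart of the argument is evaluating this $(n-1)$-th derivative. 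Expanding $e^{-u}u^{\rho-1}=\sum_{m\ge0}\tfrac{(-1)^{m}}{m!}u^{m+\rho-1}$ and differentiating term by term (justified by locally uniform convergence), one obtains
\[
\frac{d^{\,n-1}}{du^{\,n-1}}\bigl[e^{-u}u^{\rho-1}\bigr]_{u=\theta}=\Gamma(\rho)\,\theta^{\rho-n}\sum_{m\ge0}\frac{(\rho)_{m}}{m!\,\Gamma(m+\rho+1-n)}(-\theta)^{m},
\]
after writing $\Gamma(m+\rho)=\Gamma(\rho)(\rho)_{m}$. Recognising the final series as the three-parameter Mittag-Leffler function $E_{1,\rho+1-n}^{\rho}(-\theta)$ of Appendix A and reassembling with the prefactor $\tfrac{(2n)!}{2^{n}n!}$ yields exactly (\ref{yy2}).

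For the orthogonal polynomials I would run Gram-Schmidt on $\{1,x,x^{2},x^{3}\}$ in $L^{2}(\mathbb{R},\nu_{\rho,\theta}^{1})$, where the vanishing of odd moments does most of the work. Since $\int x\,d\nu_{\rho,\theta}^{1}=0$, the monomials $1$ and $x$ are already orthogonal, giving $H_{0}^{\rho,\theta}=1$ and $H_{1}^{\rho,\theta}=x$. Writing $H_{2}^{\rho,\theta}=x^{2}-c$, the constant $c$ is the second moment; evaluating (\ref{yy2}) at $n=1$ with the simplification $E_{1,\rho}^{\rho}(-\theta)=e^{-\theta}/\Gamma(\rho)$ gives $c=\theta^{\rho-1}e^{-\theta}/\Gamma(\rho,\theta)$. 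For $H_{3}^{\rho,\theta}$ the projections onto $H_{0}^{\rho,\theta}$ and $H_{2}^{\rho,\theta}$ vanish (they reduce to odd moments), so only the $H_{1}^{\rho,\theta}$-component survives, equal to $\bigl(\int x^{4}d\nu/\int x^{2}d\nu\bigr)\,x$; using (\ref{yy2}) at $n=2$ together with $E_{1,\rho-1}^{\rho}(-\theta)=e^{-\theta}(\rho-1-\theta)/\Gamma(\rho)$ collapses this ratio to $3\bigl(1+(1-\rho)\theta^{-1}\bigr)$, matching (\ref{pp}).

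The main obstacle is the derivative evaluation of the second paragraph: carrying the series bookkeeping cleanly and, above all, matching the resulting power series to the correct Prabhakar function. The index shift $\beta=\rho+1-n$ and the third parameter $\gamma=\rho$ must emerge exactly, and it is this identification that forces the particular Mittag-Leffler form appearing in (\ref{yy2}).
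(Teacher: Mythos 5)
Your proof is correct, and it takes a genuinely different route from the paper's. The paper proves \eqref{yy2} by differentiating the characteristic function $\Phi_{\rho,\theta}(\xi)$ directly in $\xi$: it expands $e^{-(\xi^2/2+\theta)}(\xi^2/2+\theta)^{\rho-1}$ as a double series, differentiates term by term, evaluates at $\xi=0$, and identifies the surviving sum with the Prabhakar function $E^{\rho}_{1,\rho-n}(-\theta)$; the odd/even dichotomy comes out of the same bookkeeping. You instead exploit the Bernstein representation \eqref{1dim} to view $\nu^{1}_{\rho,\theta}$ as a Gaussian scale mixture with mixing law $\mu_{\rho,\theta}$, which gives the vanishing of odd moments for free (symmetry), factors the even moments as $\frac{(2n)!}{2^n n!}M_n$, and reduces everything to computing $M_n$ from derivatives of the one-dimensional Laplace transform $\Gamma(\rho,\theta+\eta)/\Gamma(\rho,\theta)$ at $\eta=0$. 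Your identity $\frac{d^{n-1}}{du^{n-1}}[e^{-u}u^{\rho-1}]_{u=\theta}=\Gamma(\rho)\theta^{\rho-n}E^{\rho}_{1,\rho+1-n}(-\theta)$, derived by hand from the series, is exactly the differentiation formula (1.9.6) of \cite{KIL} that the paper invokes (in its later Lemma on the Laplace transform) combined with $E^{\rho}_{1,\rho}(-\theta)=e^{-\theta}/\Gamma(\rho)$; so the two computations meet at the same Prabhakar identification. What your route buys: it avoids the double-series manipulation entirely, and it makes structural use of the mixture representation that the paper itself only deploys later (in verifying C2, equation \eqref{Gaussianity}), so the argument is arguably more transparent and more clearly explains why the moments have the Gaussian factor $\frac{(2n)!}{2^n n!}$ times a mixing moment. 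What it costs: you must justify that $M_n<\infty$ and that $M_n=(-1)^nL^{(n)}(0)$; your appeal to smoothness of the Laplace transform at $0$ should be backed by a short monotone-convergence argument (differentiate under the integral for $\eta>0$, then let $\eta\downarrow 0$), since \emph{a priori} one only knows \eqref{1dim} for $\eta\geq 0$ --- the paper's route has the mirror-image obligation (even-order differentiability of a characteristic function at $0$ implies finiteness of the corresponding moment), which it likewise leaves implicit. Your closed forms $E^{\rho}_{1,\rho-1}(-\theta)=e^{-\theta}(\rho-1-\theta)/\Gamma(\rho)$ and the resulting second and fourth moments agree with the paper's \eqref{pp2}, and your Gram--Schmidt step for $H_2^{\rho,\theta}$ and $H_3^{\rho,\theta}$ matches the paper's.
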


\begin{proof}
We evaluate the derivatives of (\ref{yy3}): for $n=1$ we get
\begin{equation*}
\frac{d}{d\xi }\Phi _{\rho ,\theta }(\xi )=\frac{d}{d\xi }\frac{\Gamma (\rho
,\frac{1}{2}\xi ^{2}+\theta )}{\Gamma (\rho ,\theta )}=-\frac{\xi }{\Gamma
(\rho ,\theta )}e^{-(\frac{\xi ^{2}}{2}+\theta )}(\frac{\xi ^{2}}{2}+\theta
)^{\rho -1},
\end{equation*}
which vanishes, for $\xi =0.$ For $n > 1$ and $l=1,2,...$ we have instead
that
\begin{eqnarray*}
\frac{d^{l+1}}{d\xi ^{l+1}}\Phi _{\rho ,\theta }(\xi ) &=&-\frac{1}{\Gamma
(\rho ,\theta )}\frac{d^{l}}{d\xi ^{l}}\left[ \xi e^{-(\frac{\xi ^{2}}{2}%
+\theta )}(\frac{\xi ^{2}}{2}+\theta )^{\rho -1}\right] \\
&=&\frac{1}{\Gamma (\rho ,\theta )}\frac{d^{l}}{d\xi ^{l}}\left[ \xi
\sum_{j=0}^{\infty }(-1)^{j+1}\frac{(\frac{\xi ^{2}}{2}+\theta )^{j+\rho -1}%
}{j!} \right] \\
&=&\frac{1}{\Gamma (\rho ,\theta )}\frac{d^{l}}{d\xi ^{l}}\left[
\sum_{j=0}^{\infty }\frac{(-1)^{j+1}}{j!}\sum_{k=0}^{\infty }\frac{1}{2^k}%
\binom{j+\rho -1 }{k}\xi ^{2k+1}\theta ^{j+\rho -1-k}\right] \\
&=&\frac{1}{\Gamma (\rho ,\theta )}\sum_{j=0}^{\infty }\frac{(-1)^{j+1}}{j!}
\sum_{k=\left\lceil (l-1)/2\right\rceil }^{\infty }\frac{1}{2^k}\frac{(2k+1)!%
}{(2k+1-l)! }\binom{j+\rho -1}{k}\xi ^{2k+1-l}\theta ^{j+\rho -1-k}.
\end{eqnarray*}

For $\xi =0$ and for odd $l=2n+1$, the term $k=(l-1)/2=n$ is only one
different from zero, so that we get:
\begin{eqnarray*}
\left. \frac{d^{l+1}}{d\xi ^{l+1}}\Phi _{\rho ,\theta }(\xi
)\right\vert_{\xi =0}&=&-\frac{(2n+1)!\theta ^{\rho -n-1}}{n!2^n\Gamma (\rho
,\theta )}\sum_{j=0}^{\infty }\frac{(-\theta )^{j}}{j!}\frac{\Gamma (j+\rho )%
}{\Gamma(j+\rho -n)} \\
&=&-\frac{(2n+1)!\theta ^{\rho -n-1}\Gamma(\rho)}{n!2^n\Gamma (\rho ,\theta )%
} E_{1,\rho-n}^{\rho}(-\theta).  \label{pp3}
\end{eqnarray*}

Thus we obtain formula (\ref{yy2}) and the first even moments read
\begin{eqnarray}
\int_{\mathbb{R}}x^{2}d\nu _{\rho ,\theta }^{1}(x) &=&\frac{\theta ^{\rho
-1}e^{-\theta }}{\Gamma (\rho ,\theta )}  \label{pp2} \\
\int_{\mathbb{R}}x^{4}d\nu _{\rho ,\theta }^{1}(x), &=&\frac{3e^{-\theta }}{
\Gamma (\rho ,\theta )}\left[ \theta ^{\rho -1}+(1-\rho )\theta ^{\rho -2} %
\right] .  \notag
\end{eqnarray}

The polynomials in (\ref{pp}) follow from (\ref{pp2}), by solving the
following equations, for $H_{k}^{\rho ,\theta }(x)$, $k=0,1,...,$
\begin{equation*}
\mathbb{E}_{\nu _{\rho ,\theta }^{1}}\left[ \left(
a_{0}+a_{1}X+...+X^{r}\right) X^{k}\right] =0,
\end{equation*}
for $r=0,1,...k$.
\end{proof}

\begin{remark}
For $\rho =1$ and for any $\theta$, formula (\ref{yy2}) reduces to the $k$%
-th moment (for $k=2n$) of a Gaussian random variable with variance $1$:
\begin{eqnarray*}
\int_{\mathbb{R}}x^{2n}d\nu _{\rho ,\theta }^{1}(x) &=& \frac{(2n)!}{n!2^n}%
\frac{(-1)^{n+1}\theta^{1-n}}{e^{-\theta}}E^{1}_{1,2-n}(-\theta) \\
&=&\frac{(2n)!}{n!2^n}=n!!
\end{eqnarray*}
where we use the fact that $\theta ^{\rho -n}E_{1,\rho +1-n}^{\rho }\left(
-\theta \right) =\frac{d^{n-1}}{d\theta ^{n-1}}\left[ \theta ^{\rho
-1}E_{1,\rho }^{\rho }\left( -\theta \right) \right]$.\newline
Correspondingly, for $\rho =1$ and for any $\theta$, $H_{n}^{\rho ,\theta },$
given in (\ref{pp}), for $n=0,1,2,3$, reduce to the first four Hermite
polynomials.
\end{remark}

We can now extend the $n$-dimensional $\Gamma$-grey measure to the infinite
dimensional space $\mathcal{S}^{\prime }(\mathbb{R})$, dual of the space of
Schwartz functions $\mathcal{S}(\mathbb{R})$ (respectively $\mathcal{S}%
^{\prime}$ and $\mathcal{S}$, hereinafter).

Recalling that $\mathcal{S}\subset L^{2}(\mathbb{R},dx)\subset \mathcal{S}%
^{\prime }$ is a nuclear triple, we can define the measure $\nu _{\rho
,\theta }$ on $(\mathcal{S}^{\prime }, \mathcal{\sigma}^*)$ via the
Bochner-Minlos theorem, where $\mathcal{\sigma}^*$ is the $\sigma$-algebra
generated by the cylinders \cite{HID}.\newline
In analogy to the above definition of $\nu _{\rho ,\theta }^{n}$ in $\mathbb{%
R}^{n}$, we have the following:

\begin{definition}
For $\rho \in (0,1]$, $\theta>0$, the $\Gamma$-grey measure $\nu _{\rho
,\theta }$ is the unique probability measure fulfilling
\begin{equation}
\int_{\mathcal{S}^{\prime }}e^{i\left\langle x,\xi \right\rangle }d\nu
_{\rho ,\theta }(x)=\frac{\Gamma (\rho , \theta + \frac{1}{2}\left\langle
\xi ,\xi \right\rangle)}{\Gamma (\rho ,\theta )}\text{,\qquad }\xi \in
\mathcal{S}.
\end{equation}

We call $(\mathcal{S}^{\prime}, \sigma^*, \nu_{\rho,\theta})$ the $\Gamma$%
-grey noise space.
\end{definition}

\begin{remark}
For $\rho =1$ it reduces to the Gaussian white noise measure $\nu :=\nu
_{1,\theta }$, for any $\theta$.\newline
\end{remark}

The moments and the covariance of generalized stochastic processes on $(%
\mathcal{S}^{\prime},\sigma^*,\nu_{\rho,\theta})$ can be obtained by
considering those of the one-dimensional measure, given in Lemma \ref%
{moments}.

\begin{coro}
\label{cor1} Let $\rho \in (0,1]$ and $\theta >0$. Let $\xi ,\eta \in
\mathcal{S}$ and $n\in \mathbb{N}$, then $\int_{\mathcal{S}^{\prime
}}\left\langle x,\xi \right\rangle ^{2n+1}d\nu _{\rho ,\theta }(x)=0$ and
\begin{equation}
\int_{\mathcal{S}^{\prime }(\mathbb{R})}\left\langle x,\xi \right\rangle
^{2n}d\nu _{\rho ,\theta }(x)=\frac{(-1)^{n+1}(2n)!\Gamma (\rho )\theta
^{\rho -n}\left\langle \xi ,\xi \right\rangle ^{n}}{n!2^{n}\Gamma (\rho
,\theta )}E_{1,\rho +1-n}^{\rho }\left( -\theta \right) .  \label{ni}
\end{equation}

Moreover,%
\begin{equation}
\mathbb{E}_{\nu _{\rho ,\theta }}(\langle \omega ,\xi _{1}\rangle \langle \omega ,\xi _{2}\rangle )=%
\frac{\theta ^{\rho -1}e^{-\theta }}{\Gamma (\rho ,\theta )}\langle \xi
_{1},\xi _{2}\rangle ,  \label{ni2}
\end{equation}

for $\xi _{1},\xi _{2}\in \mathcal{S}$ and $\omega \in \mathcal{S}^{\prime }
$. Moreover, $\left\Vert \left\langle \cdot ,\xi \right\rangle \right\Vert
_{L^{2}(\nu _{\rho ,\theta })}^{2}=\theta ^{\rho -1}e^{-\theta }\Vert \xi
\Vert ^{2}/\Gamma (\rho ,\theta ).$
\end{coro}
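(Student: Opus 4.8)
The plan is to reduce every moment appearing in the statement to the one-dimensional quantities already computed in Lemma \ref{moments}, exploiting the fact that the characteristic functional of $\nu _{\rho ,\theta }$ depends on the test function only through the scalar product $\langle \xi ,\xi \rangle $.

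First I would fix $\xi \in \mathcal{S}$ with $\|\xi \|^{2}=\langle \xi ,\xi \rangle >0$ (the case $\xi =0$ being trivial) and identify the law of the real random variable $\omega \mapsto \langle \omega ,\xi \rangle $ under $\nu _{\rho ,\theta }$. Substituting $t\xi $ for $\xi $ in the defining characteristic functional gives, for $t\in \mathbb{R}$,
\[
\mathbb{E}_{\nu _{\rho ,\theta }}\big[e^{it\langle \cdot ,\xi \rangle }\big]=\frac{\Gamma \!\big(\rho ,\theta +\tfrac{t^{2}}{2}\langle \xi ,\xi \rangle \big)}{\Gamma (\rho ,\theta )},
\]
which, compared with $\Phi _{\rho ,\theta }$ evaluated at $t\|\xi \|$, shows that $\langle \cdot ,\xi \rangle $ has the same distribution as $\|\xi \|\,X$, where $X\sim \nu _{\rho ,\theta }^{1}$. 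Since Lemma \ref{moments} guarantees that all moments of $X$ are finite, it follows that
\[
\int_{\mathcal{S}^{\prime }}\langle x,\xi \rangle ^{k}\,d\nu _{\rho ,\theta }(x)=\|\xi \|^{k}\int_{\mathbb{R}}x^{k}\,d\nu _{\rho ,\theta }^{1}(x).
\]
Inserting the odd- and even-order moments from \eqref{yy2} and using $\|\xi \|^{2n}=\langle \xi ,\xi \rangle ^{n}$ then yields at once the vanishing of the odd moments and formula \eqref{ni}.

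For the mixed second moment \eqref{ni2} I would instead work with two test functions $\xi _{1},\xi _{2}$ and consider the joint characteristic function $\phi (s,t):=\mathbb{E}_{\nu _{\rho ,\theta }}[e^{i\langle \cdot ,s\xi _{1}+t\xi _{2}\rangle }]$, which by the defining functional equals $\Gamma (\rho ,\theta +\tfrac{1}{2}Q(s,t))/\Gamma (\rho ,\theta )$ with the quadratic form $Q(s,t)=s^{2}\langle \xi _{1},\xi _{1}\rangle +2st\langle \xi _{1},\xi _{2}\rangle +t^{2}\langle \xi _{2},\xi _{2}\rangle $. Writing $g(u):=\Gamma (\rho ,\theta +\tfrac{u}{2})$ and using $g^{\prime }(u)=-\tfrac{1}{2}e^{-(\theta +u/2)}(\theta +u/2)^{\rho -1}$, the mixed partial derivative at $s=t=0$ picks out only the cross term $Q_{st}=2\langle \xi _{1},\xi _{2}\rangle $ (the contributions involving $Q_{s}$ and $Q_{t}$ vanishing there), so that $\mathbb{E}_{\nu _{\rho ,\theta }}[\langle \cdot ,\xi _{1}\rangle \langle \cdot ,\xi _{2}\rangle ]=-\partial _{s}\partial _{t}\phi |_{0}=\theta ^{\rho -1}e^{-\theta }\langle \xi _{1},\xi _{2}\rangle /\Gamma (\rho ,\theta )$. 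Setting $\xi _{1}=\xi _{2}=\xi $ immediately gives the stated $L^{2}$-norm.

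The only point requiring care is the interchange of differentiation and integration used both in identifying the law of $\langle \cdot ,\xi \rangle $ and in differentiating $\phi $; this is justified by the finiteness of the moments of $\nu _{\rho ,\theta }^{1}$ established in Lemma \ref{moments}, equivalently by the smoothness of $u\mapsto \Gamma (\rho ,\theta +u)$ for $\theta >0$. I therefore expect no genuine obstacle beyond this routine dominated-convergence argument.
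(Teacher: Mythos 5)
Your proposal is correct and takes essentially the same approach as the paper: the moment formulas are reduced to the one-dimensional moments of Lemma \ref{moments} (your scaling identification of the law of $\langle \cdot ,\xi \rangle$ with that of $\Vert \xi \Vert X$, $X\sim \nu _{\rho ,\theta }^{1}$, merely makes explicit what the paper leaves implicit), and the covariance is obtained, exactly as in the paper, by differentiating the characteristic functional $\Gamma \bigl(\rho ,\theta +\tfrac{1}{2}\Vert s\xi _{1}+t\xi _{2}\Vert ^{2}\bigr)/\Gamma (\rho ,\theta )$ twice at the origin and multiplying by $i^{-2}$. Your observation that only the cross term of the quadratic form survives at $s=t=0$ is the same computation the paper carries out term by term.
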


\begin{proof}
Since the moments are easily obtained from Lemma \ref{moments}, we just
compute the covariance as

\[ \mathbb{E}(\langle \omega ,\xi _{1}\rangle
\langle \omega ,\xi _{2}\rangle )=i^{-2}D_{a_{1},a_{2}}\left. \Big(\frac{%
	\Gamma (\rho ,\theta +\frac{1}{2}\Vert a_{1}\xi _{1}+a_{2}\xi _{2}\Vert ^{2})%
}{\Gamma (\rho ,\theta )}\Big)\right\vert _{_{a_{1}=a_{2}=0}} \text{ for } \omega
\in \mathcal{S}^{\prime },\]
where $D_{a_1,a_2}$ is the derivative w.r.t. $a_1$ and $a_2$.

We can write $\Vert a_{1}\xi _{1}+a_{2}\xi _{2}\Vert ^{2}=\langle
a_{1}\xi _{1}+a_{2}\xi _{2},a_{1}\xi _{1}+a_{2}\xi _{2}\rangle $
and thanks to the bi-linearity we have $a_{1}^{2}\Vert \xi
_{1}\Vert ^{2}+2a_{1}a_{2}\langle
\xi _{1},\xi _{2}\rangle +a_{2}^{2}\Vert \xi _{2}\Vert ^{2}=:F(a_{1},a_{2})$%
. Hence,
\begin{eqnarray*}
&&D_{a_{1},a_{2}}\Big(\frac{\Gamma (\rho ,\theta +\frac{1}{2}\Vert a_{1}\xi
_{1}+a_{2}\xi _{2}\Vert ^{2})}{\Gamma (\rho ,\theta )}\Big)=\frac{1}{\Gamma
(\rho ,\theta )}D_{a_{1},a_{2}}\Big(\Gamma (\rho ,\theta +\frac{1}{2}%
F(a_{1},a_{2}))\Big) \\
&=&-\frac{1}{\Gamma (\rho ,\theta )}D_{a_{1}}\Big((\theta +\frac{1}{2}%
F(a_{1},a_{2}))^{\rho -1}e^{-(\theta +\frac{1}{2}F(a_{1},a_{2}))}(a_{1}%
\langle \xi _{1},\xi _{2}\rangle +a_{2}\Vert \xi _{2}\Vert ^{2})\Big) \\
&=&-\frac{1}{\Gamma (\rho ,\theta )}\Big((\rho -1)(\theta +\frac{1}{2}%
F(a_{1},a_{2}))^{\rho -2}e^{-(\theta +\frac{1}{2}F(a_{1},a_{2}))}(a_{1}\Vert
\xi _{1}\Vert ^{2}+a_{2}\langle \xi _{1},\xi _{2}\rangle )(a_{1}\langle \xi
_{1},\xi _{2}\rangle +a_{2}\Vert \xi _{2}\Vert ^{2}) \\
&\quad &+(\theta +\frac{1}{2}F(a_{1},a_{2}))^{\rho -1}e^{-(\theta +\frac{1}{2%
}F(a_{1},a_{2}))}(-(a_{1}\Vert \xi _{1}\Vert ^{2}+a_{2}\langle \xi _{1},\xi
_{2}\rangle ))(a_{1}\langle \xi _{1},\xi _{2}\rangle +a_{2}\Vert \xi
_{2}\Vert ^{2}) \\
&\quad &+(\theta +\frac{1}{2}F(a_{1},a_{2}))^{\rho -1}e^{-(\theta +\frac{1}{2%
}F(a_{1},a_{2}))}\langle \xi _{1},\xi _{2}\rangle \Big),
\end{eqnarray*}%
which, taking $a_{1}=a_{2}=0$ and multiplying by $i^{-2}=-1,$ coincides with
(\ref{ni2}).
\end{proof}

We now want to prove that the $\Gamma$-grey measure
$\nu_{\rho,\theta}$ belongs to the class of measures for which the
Appell systems exist. The latter are bi-orthogonal polynomials
which replace the Wick-ordered polynomials of Gaussian analysis
and have been proved to be fundamental tools in the non-Gaussian
context. To this aim, it is sufficient to prove the following
conditions are satisfied (see \cite{KSWY98} for details):

\begin{enumerate}[label=\textbf{C\arabic*}]

\item For $\rho \in (0,1]$ and $\theta > 0$, $\nu_{\rho,\theta}$ has an
analytic Laplace transform in a neighborhood of zero, i.e. the following
mapping is holomorphic in a neighborhood $\mathcal{U}\subset \mathcal{S}_{%
\mathbb{C}}$ of zero:
\begin{equation*}
\mathcal{S}_{\mathbb{C}} \ni \phi \mapsto \ell_{\nu} (\phi):=\int_{\mathcal{S%
}^{\prime}}\exp{\langle x,\phi \rangle }d\nu_{\rho,\theta}(x) \in \mathbb{C}
\end{equation*}%
\newline

\item For $\rho \in (0,1]$ and $\theta > 0$, $\nu_{\rho,\theta}(\mathcal{U}%
)>0$ for any non-empty open subset $\mathcal{U}\subset \mathcal{S}^{\prime}$.%
\newline
\end{enumerate}

As far as C1 is concerned, we recall in Appendix B some definitions and
well-known results on holomorphic property; on the basis of the latter, we
show that for $\rho \in (0,1]$ and $\theta>0$ the measure $\nu_{\rho,\theta}$
admits a Laplace transform defined only on a subset of $\mathcal{S}_{\mathbb{%
C}}$ but it is holomorphic on that subset and it is positive on non-empty,
open subsets.\newline

First we show that $\ell_{\nu}(\xi)$ is well-defined on a subset of $\mathcal{S}$.

\begin{lem}
\label{Lap in 1} Let $\rho \in (0,1)$, $\theta >0$ and $\lambda \in \mathbb{R%
}/\{0\}$, then the exponential function $\mathcal{S}^{\prime }\ni \omega
\mapsto e^{|\lambda \langle x,\phi \rangle |}$ is integrable and
\begin{equation}
\int_{\mathcal{S}^{\prime }}e^{\lambda \langle x,\phi \rangle }d\nu _{\rho
,\theta }(x)=\frac{\Gamma (\rho ,\theta -\frac{\lambda ^{2}}{2}\langle \phi
,\phi \rangle )}{\Gamma (\rho ,\theta )},\qquad \text{ for }\phi \in B_{%
\sqrt{2\theta /\lambda ^{2}}}(0).  \label{le}
\end{equation}
\end{lem}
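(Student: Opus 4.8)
The plan is to realise $\nu_{\rho,\theta}$ as a Gaussian scale mixture governed by $\mu_{\rho,\theta}$, and thereby reduce (\ref{le}) to a one-dimensional moment generating function. Let $\gamma_s$ be the centered Gaussian (white noise) measure on $\mathcal{S}'$ with characteristic functional $e^{-\frac{s}{2}\langle\xi,\xi\rangle}$. The mixture $\tilde\nu(A):=\int_0^\infty\gamma_s(A)\,d\mu_{\rho,\theta}(s)$ is a probability measure on $\mathcal{S}'$ whose characteristic functional is, by (\ref{1dim}) with $\eta=\frac{1}{2}\langle\xi,\xi\rangle$, exactly $\Gamma(\rho,\theta+\frac{1}{2}\langle\xi,\xi\rangle)/\Gamma(\rho,\theta)$; by uniqueness in the Bochner--Minlos theorem, $\tilde\nu=\nu_{\rho,\theta}$. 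Equivalently, taking $\xi=t\phi$ in the defining characteristic functional and using (\ref{1dim}) with $\eta=\frac{t^2}{2}\|\phi\|^2$ (where $\|\phi\|^2=\langle\phi,\phi\rangle$), the scalar random variable $\langle\cdot,\phi\rangle$ has the law of $\sqrt{S}\,\|\phi\|\,Z$, with $Z$ standard Gaussian and $S\sim\mu_{\rho,\theta}$ independent.

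Granting this representation, both the integrability assertion and the evaluation follow by conditioning on $S$ and applying Tonelli. Since $e^{|y|}\le e^{y}+e^{-y}$ and $\mathbb{E}[e^{aZ}]=e^{a^{2}/2}$, I would write
\[
\int_{\mathcal{S}'}e^{|\lambda\langle x,\phi\rangle|}\,d\nu_{\rho,\theta}(x)\le 2\int_0^\infty e^{\frac{\lambda^2}{2}\|\phi\|^2 s}\,d\mu_{\rho,\theta}(s),
\]
so finiteness of the right-hand side is precisely the integrability claim; the same computation without the absolute value gives
\[
\int_{\mathcal{S}'}e^{\lambda\langle x,\phi\rangle}\,d\nu_{\rho,\theta}(x)=\int_0^\infty e^{\frac{\lambda^2}{2}\|\phi\|^2 s}\,d\mu_{\rho,\theta}(s).
\]

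It remains to evaluate this integral, and here lies the main obstacle: the Bernstein representation (\ref{1dim}) is stated only for nonnegative argument, whereas the relevant value is the negative quantity $\eta=-\frac{\lambda^2}{2}\|\phi\|^2$. I would settle this by analytic continuation. The one-sided Laplace transform $\eta\mapsto\int_0^\infty e^{-\eta s}\,d\mu_{\rho,\theta}(s)$ and the function $\eta\mapsto\Gamma(\rho,\theta+\eta)/\Gamma(\rho,\theta)$ are both analytic on $\{\eta>-\theta\}$ and coincide for $\eta\ge 0$, hence they agree on all of $(-\theta,+\infty)$ by the identity theorem. The abscissa of convergence is exactly $-\theta$, so the integral converges if and only if $\frac{\lambda^2}{2}\|\phi\|^2<\theta$, that is $\phi\in B_{\sqrt{2\theta/\lambda^2}}(0)$, and on that ball it equals $\Gamma(\rho,\theta-\frac{\lambda^2}{2}\|\phi\|^2)/\Gamma(\rho,\theta)$, which is (\ref{le}).

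The two points demanding care are the rigorous passage from the characteristic functional to the scale-mixture identity on $\mathcal{S}'$ (handled by Bochner--Minlos uniqueness as above), and pinning the abscissa of convergence at exactly $-\theta$ --- equivalently, verifying that $\int_0^\infty e^{\zeta s}\,d\mu_{\rho,\theta}(s)$ is finite precisely for $\zeta<\theta$, so that the natural domain in (\ref{le}) is the open ball $B_{\sqrt{2\theta/\lambda^2}}(0)$ and no larger.
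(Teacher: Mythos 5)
Your reduction is correct and genuinely different from the paper's argument. You realise $\nu_{\rho,\theta}$ as the Gaussian scale mixture $\int_0^\infty \mu^s\,d\mu_{\rho,\theta}(s)$ via Bochner--Minlos uniqueness (the representation the paper itself introduces only later, in (\ref{Gaussianity}), to verify condition C2), and then Tonelli reduces both the integrability claim and formula (\ref{le}) to the single one-dimensional identity
\begin{equation*}
\int_0^\infty e^{\zeta s}\,d\mu_{\rho,\theta}(s)=\frac{\Gamma(\rho,\theta-\zeta)}{\Gamma(\rho,\theta)},\qquad \zeta:=\tfrac{\lambda^2}{2}\langle\phi,\phi\rangle\in(0,\theta).
\end{equation*}
By contrast, the paper expands $e^{|\lambda\langle x,\phi\rangle|}$ in a power series on $\mathcal{S}'$, evaluates the even terms through the moment formula (\ref{ni}), dominates the odd terms by the even ones via Cauchy--Schwarz, and concludes by monotone convergence; your route eliminates that odd/even bookkeeping entirely.

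However, the one step in which all the analytic content now resides is not proved. You assert that the Laplace transform $L(\eta)=\int_0^\infty e^{-\eta s}\,d\mu_{\rho,\theta}(s)$ is ``analytic on $\{\eta>-\theta\}$'' and invoke the identity theorem. If $L$ denotes the function defined by the integral, its analyticity on $\{\eta>-\theta\}$ is equivalent to convergence of the integral there, i.e. to the very statement being proved: relation (\ref{1dim}) is given only for $\eta\ge 0$, and nothing a priori excludes that the abscissa of convergence is $0$. If instead $L$ denotes some analytic continuation, then agreement of continuations does not imply that the integral representation persists at negative arguments. Either way the argument is circular or incomplete, and your closing paragraph concedes the point without filling it. The gap is closable in two ways: (a) Landau's theorem for Laplace transforms of positive measures (the real point at the convergence abscissa is a singularity of $L$; were the abscissa some $\sigma_c>-\theta$, then $L$ would coincide on $(\sigma_c,\infty)$ with $\Gamma(\rho,\theta+\cdot)/\Gamma(\rho,\theta)$, which is analytic at $\sigma_c$, a contradiction); or (b) the moment route: the moments $m_n=\int_0^\infty s^n\,d\mu_{\rho,\theta}(s)=(-1)^n\left.\partial_x^n\Gamma(\rho,\theta+x)\right|_{x=0}/\Gamma(\rho,\theta)$ are finite, monotone convergence gives $\int_0^\infty e^{\zeta s}\,d\mu_{\rho,\theta}(s)=\sum_{n\ge 0}m_n\zeta^n/n!$, and this series is the Taylor series of $\Gamma(\rho,\theta-\zeta)/\Gamma(\rho,\theta)$, convergent for $\zeta<\theta$ because the branch point of $\Gamma(\rho,\cdot)$ at the origin (see (\ref{gg2})) fixes the radius of convergence at exactly $\theta$. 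Note that (b) is precisely the paper's computation transplanted to the mixing measure, which shows that the step you left open is exactly where the paper's proof does its actual work. Finally, a small inaccuracy: the integral also converges at $\zeta=\theta$ (by monotone convergence its value there is $\Gamma(\rho)/\Gamma(\rho,\theta)<\infty$), so ``finite precisely for $\zeta<\theta$'' is not correct, although this is immaterial for the lemma, which concerns only the open ball.
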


\begin{proof}
For $\lambda \in \mathbb{R}/\{0\}$ we start by proving the integrability. We
can define the monotone increasing sequence $g_{N}(\cdot ):=\sum_{n=0}^{N}%
\frac{1}{n!}|\langle \cdot ,\lambda \phi \rangle |^{n}$. We divide the
elements of $g_{N}$ into odd and even terms,
\begin{equation*}
g_{N}(\cdot )=\sum_{n=0}^{\lfloor N/2\rfloor }\frac{1}{(2n)!}|\langle \cdot
,\lambda \phi \rangle |^{2n}+\sum_{n=0}^{\lceil N/2\rceil -1}\frac{1}{(2n+1)!%
}|\langle \cdot ,\lambda \phi \rangle |^{2n+1}
\end{equation*}%
and we apply the integral to each term. For the even terms we get:
\begin{equation*}
\frac{1}{(2n)!}\int_{\mathcal{S}^{\prime }}|\langle x,\lambda \phi \rangle
|^{2n}d\nu _{\rho, \theta }(x)=\frac{\Gamma (\rho )}{\Gamma (\rho ,\theta )}%
\frac{(-1)^{n+1}\theta ^{\rho -n}E_{1,\rho +1-n}^{\rho }\left( -\theta
\right) }{n!2^{n}}\langle \lambda \phi ,\lambda \phi \rangle ^{n}.
\end{equation*}%
\newline
By considering that $\theta ^{\rho -n}E_{1,\rho +1-n}^{\rho }(-\theta )=%
\frac{d^{n-1}}{d\theta ^{n-1}}[\theta ^{\rho -1}E_{1,\rho }^{\rho }(-\theta
)]$ (see (1.9.6) in \cite{KIL}) and $E_{1,\rho }^{\rho }(-\theta )=\frac{1}{%
\Gamma (\rho )}e^{-\theta }$, we get
\begin{eqnarray*}
\theta ^{\rho -n}E_{1,\rho +1-n}^{\rho }(-\theta ) &=&\frac{1}{\Gamma (\rho )%
}\frac{d^{n-1}}{d\theta ^{n-1}}[\theta ^{\rho -1}e^{-\theta }]=\frac{1}{%
\Gamma (\rho )}\frac{\partial ^{n-1}}{\partial \theta ^{n-1}}\left[ -\frac{%
\partial }{\partial \theta }\Gamma (\rho ,\theta )\right]  \\
&=&-\frac{1}{\Gamma (\rho )}\frac{\partial ^{n}}{\partial \theta ^{n}}\Gamma
(\rho ,\theta )=-\frac{1}{\Gamma (\rho )}\left. \frac{\partial ^{n}}{%
\partial x^{n}}\Gamma (\rho ,\theta +x)\right\vert _{x=0}
\end{eqnarray*}

Hence, we have that
\begin{equation*}
(-1)^{n+1}\theta ^{\rho -n}E_{1,\rho +1-n}^{\rho }\left( -\theta \right) =%
\frac{(-1)^{n}}{\Gamma (\rho )}\left. \frac{\partial ^{n}}{\partial x^{n}}%
\Gamma (\rho ,\theta +x)\right\vert _{x=0},
\end{equation*}%
so that each even term is equal to:
\begin{equation*}
\frac{1}{(2n)!}\int_{\mathcal{S}^{\prime }}|\langle x,\lambda \phi \rangle
|^{2n}d\nu _{\rho, \theta }(x)=\frac{1}{\Gamma (\rho ,\theta )}\frac{\left.
\frac{\partial ^{n}}{\partial x^{n}}\Gamma (\rho ,\theta +x)\right\vert
_{x=0}}{n!2^{n}}(-\langle \lambda \phi ,\lambda \phi \rangle )^{n}=:E(n).
\end{equation*}%
We can estimate the odd terms using the Cauchy-Schwarz inequality on $L^{2}(%
\mathcal{S}^{\prime },\sigma ^{\ast },\nu _{\rho, \theta })$ and the
inequality $st\leq 1/2(s^{2}+t^{2})$, for $s,t\in \mathbb{R}$:

\begin{eqnarray*}
&&\frac{1}{(2n+1)!}\int_{\mathcal{S}^{\prime }}|\langle x,\lambda \phi
\rangle |^{2n+1}d\nu _{\rho, \theta }(x) \\
&=&\frac{1}{(2n+1)!}\int_{\mathcal{S}^{\prime }}|\langle x,\lambda \phi
\rangle |^{n+1}|\langle x,\lambda \phi \rangle |^{n}d\nu _{\rho, \theta }(x)
\\
&\leq &\frac{1}{(2n+1)!}\Big(\int_{\mathcal{S}^{\prime }}|\langle x,\lambda
\phi \rangle |^{2n+2}d\nu _{\rho, \theta }(x)\Big)^{1/2}\Big(\int_{\mathcal{S%
}^{\prime }}|\langle x,\lambda \phi \rangle |^{2n}d\nu _{\rho, \theta }(x)%
\Big)^{1/2} \\
&\leq &\frac{1}{(2n+1)!}\Big(\frac{1}{2}\int_{\mathcal{S}^{\prime }}|\langle
x,\lambda \phi \rangle |^{2n+2}d\nu _{\rho, \theta }(x)+\frac{1}{2}\int_{%
\mathcal{S}^{\prime }}|\langle x,\lambda \phi \rangle |^{2n}d\nu _{\rho
,\theta }(x)\Big) \\
&\leq &\frac{1}{(2n+1)!}\Big(\frac{(2n+2)!}{(n+1)!2^{n+2}\Gamma (\rho
,\theta )}\left. \frac{\partial ^{n+1}}{\partial x^{n+1}}\Gamma (\rho
,\theta +x)\right\vert _{_{x=0}}(-\langle \lambda \phi ,\lambda \phi \rangle
)^{n+1}\Big) \\
&\quad &+\frac{1}{(2n+1)!}\Big(\frac{(2n)!}{n!2^{n+1}\Gamma (\rho ,\theta )}%
\left. \frac{\partial ^{n}}{\partial x^{n}}\Gamma (\rho ,\theta
+x)\right\vert _{_{x=0}}(-\langle \lambda \phi ,\lambda \phi \rangle )^{n}%
\Big) \\
&=&\frac{1}{\Gamma (\rho ,\theta )}\frac{1}{n!2^{n+1}}\left. \frac{\partial
^{n+1}}{\partial x^{n+1}}\Gamma (\rho ,\theta +x)\right\vert
_{_{x=0}}(-\langle \lambda \phi ,\lambda \phi \rangle )^{n+1} \\
&\quad &+\frac{1}{(2n+1)}\frac{1}{\Gamma (\rho ,\theta )}\frac{1}{n!2^{n+1}}%
\left. \frac{\partial ^{n}}{\partial x^{n}}\Gamma (\rho ,\theta
+x)\right\vert _{_{x=0}}(-\langle \lambda \phi ,\lambda \phi \rangle )^{n} \\
&=:&O^{\prime }(n)+O^{\prime \prime }(n).
\end{eqnarray*}

Thus, by integrating $g_N$, we get that

\begin{equation*}
\int_{\mathcal{S}^{\prime }}g_{N}(x)d\nu _{\rho, \theta }(x)\leq
\sum_{n=0}^{\lfloor N/2\rfloor }E(n)+\sum_{n=0}^{\lceil N/2\rceil
-1}O^{\prime }(n)+\sum_{n=0}^{\lceil N/2\rceil -1}O^{\prime \prime }(n).
\end{equation*}%
We have that the sum of the even terms $E(n)$ converges to $\frac{1}{\Gamma
(\rho ,\theta )}\Gamma (\rho ,\theta -\lambda ^{2}/2\langle \phi ,\phi
\rangle )$ if $\langle \phi ,\phi \rangle =\Vert \phi \Vert ^{2}<2\theta
/\lambda ^{2}$, as the Taylor expansion for $\Gamma (\rho ,\eta )$ holds for
$\rho \in (0,1),$ if $\eta >0$. For the odd terms $O^{\prime }(n)$ we have
that:

\begin{eqnarray*}
\sum_{n=0}^{\lceil N/2\rceil -1}O^{\prime }(n) &=&\frac{1}{\Gamma (\rho
,\theta )}\sum_{n=0}^{\lceil N/2\rceil -1}\frac{n+1}{2^{n+1}}\frac{\left.
\frac{\partial ^{n+1}}{\partial x^{n+1}}\Gamma (\rho ,\theta +x)\right\vert
_{x=0}}{(n+1)!}(-\langle \lambda \phi ,\lambda \phi \rangle )^{n+1} \\
&\leq &\frac{1}{2\Gamma (\rho ,\theta )}\sum_{m=1}^{\lceil N/2\rceil }\frac{%
\left. \frac{\partial ^{m}}{\partial x^{m}}\Gamma (\rho ,\theta
+x)\right\vert _{x=0}}{m!}(-\langle \lambda \phi ,\lambda \phi \rangle )^{m},
\end{eqnarray*}%
where the last sum converges. On the other hand the sum of the odd terms $%
O^{\prime \prime }(n)$ converges since

\begin{eqnarray*}
\sum_{n=0}^{\lceil N/2\rceil -1}O^{\prime \prime }(n) &=&\sum_{n=0}^{\lceil
N/2\rceil -1}\frac{1}{(2n+1)2^{n+1}}\frac{1}{\Gamma (\rho ,\theta )}\frac{%
\left. \frac{\partial ^{n}}{\partial x^{n}}\Gamma (\rho ,\theta
+x)\right\vert _{x=0}}{n!}(-\langle \lambda \phi ,\lambda \phi \rangle )^{n}
\\
&<&\frac{1}{\Gamma (\rho ,\theta )}\sum_{n=0}^{\lceil N/2\rceil -1}\frac{%
\left. \frac{\partial ^{n}}{\partial x^{n}}\Gamma (\rho ,\theta
+x)\right\vert _{x=0}}{n!}(-\langle \lambda \phi ,\lambda \phi \rangle )^{n}%
\underset{N\rightarrow \infty }{\longrightarrow }\frac{1}{\Gamma (\rho
,\theta )}\Gamma (\rho ,\theta -\lambda ^{2}\langle \phi ,\phi \rangle ).
\end{eqnarray*}

Therefore, by applying the monotone convergence theorem (as each term is
positive), we get, for $\phi \in B_{\sqrt{2\theta/\lambda^2}}(0)$, that:

\begin{eqnarray*}
\int_{\mathcal{S}^{\prime }}e^{\langle x,\lambda \phi \rangle }d\nu _{\rho
,\theta }(x) &=&\lim_{N\rightarrow \infty }\int_{\mathcal{S}^{\prime
}}g_{N}(x)d\nu _{\rho ,\theta }(x) \\
&=&\frac{1}{\Gamma (\rho ,\theta )}\sum_{n\geq 0}\frac{\left. \frac{\partial
^{n}}{\partial x^{n}}\Gamma (\rho ,\theta +x)\right\vert _{x=0}}{n!}%
(-1/2)^{n}\langle \lambda \phi ,\lambda \phi \rangle ^{n},
\end{eqnarray*}%
which coincides with (\ref{le}).
\end{proof}

Now, we prove that $\ell_{\nu}(\xi)$ is holomorphic on some neighborhood of $%
0$ in $\mathcal{S}_{\mathbb{C}}$ for $\rho \in (0,1)$ and $\theta>0$. Hence
we have that $\ell_{\nu}(\xi)$ is holomorphic on $\mathcal{U}_\theta:=B_{%
\sqrt{2\theta}}(0)\oplus i \mathcal{S}=\{\xi_1+i\xi_2| \xi_1
\in B_{\sqrt{2\theta}}(0) \text{ and } \xi_2 \in \mathcal{S}\}$.

\begin{theorem}
\label{ThmLapHolo} Let $\rho \in (0,1)$ and $\theta >0$, then the function
\begin{equation*}
\mathcal{S}_{\mathbb{C}}\supset \mathcal{U}_{\theta }\ni \xi \mapsto \int_{%
\mathcal{S}^{\prime }}e^{\langle x,\xi \rangle }d\nu _{\rho ,\theta }(x)
\end{equation*}%
is holomorphic from $\mathcal{U}_{\theta }$ to $\mathbb{C}$.
\end{theorem}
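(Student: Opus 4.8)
The plan is to establish holomorphy of $\ell_{\nu}$ on $\mathcal{U}_{\theta}$ via the standard infinite-dimensional criterion recalled in Appendix B: a function on an open subset of $\mathcal{S}_{\mathbb{C}}$ that is locally bounded and G-holomorphic (holomorphic along every complex line) is holomorphic. Thus I would split the work into three parts: well-definedness and local boundedness, then G-holomorphy, and finally the invocation of the criterion.

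First I would verify that the integral is finite and that $\ell_{\nu}$ is locally bounded. Writing $\xi=\xi_{1}+i\xi_{2}$ with $\xi_{1}\in B_{\sqrt{2\theta}}(0)$ and $\xi_{2}\in\mathcal{S}$, and using that $x\in\mathcal{S}'$ pairs with real values so that $\operatorname{Re}\langle x,\xi\rangle=\langle x,\xi_{1}\rangle$, one gets
\[
|\ell_{\nu}(\xi)|\le\int_{\mathcal{S}'}e^{\langle x,\xi_{1}\rangle}\,d\nu_{\rho,\theta}(x)=\frac{\Gamma\!\left(\rho,\theta-\tfrac{1}{2}\langle\xi_{1},\xi_{1}\rangle\right)}{\Gamma(\rho,\theta)},
\]
which is finite by Lemma \ref{Lap in 1} precisely because $\|\xi_{1}\|<\sqrt{2\theta}$. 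On any neighbourhood of a point $\xi_{0}\in\mathcal{U}_{\theta}$ whose real parts remain in a ball $B_{r}(0)$ with $r<\sqrt{2\theta}$, this bound is uniform and independent of the imaginary parts, which gives local boundedness.

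The core of the argument is G-holomorphy. Fixing $\xi_{0}\in\mathcal{U}_{\theta}$ and $\eta=\eta_{1}+i\eta_{2}\in\mathcal{S}_{\mathbb{C}}$, I would study $z\mapsto g(z):=\ell_{\nu}(\xi_{0}+z\eta)=\int_{\mathcal{S}'}e^{\langle x,\xi_{0}\rangle}e^{z\langle x,\eta\rangle}\,d\nu_{\rho,\theta}(x)$ on a small disc $|z|\le\delta$. For each fixed $x$ the integrand is entire in $z$, so it suffices to produce a $z$-independent integrable majorant and then differentiate under the integral sign (or, equivalently, apply Morera's theorem together with Fubini). Since $\operatorname{Re}\langle x,\xi_{0}+z\eta\rangle=\langle x,\xi_{1}^{0}\rangle+u\langle x,\eta_{1}\rangle-v\langle x,\eta_{2}\rangle$ for $z=u+iv$, I would bound, for $|z|\le\delta$,
\[
\big|e^{\langle x,\xi_{0}+z\eta\rangle}\big|\le e^{|\langle x,\xi_{1}^{0}\rangle|}\,e^{\delta|\langle x,\eta_{1}\rangle|}\,e^{\delta|\langle x,\eta_{2}\rangle|}.
\]
The right-hand side is in $L^{1}(\nu_{\rho,\theta})$ by the generalized H\"older inequality with exponents $p,q,r$ satisfying $\tfrac{1}{p}+\tfrac{1}{q}+\tfrac{1}{r}=1$: one chooses $p>1$ with $p\|\xi_{1}^{0}\|<\sqrt{2\theta}$ (possible since $\|\xi_{1}^{0}\|<\sqrt{2\theta}$), and then takes $\delta$ small enough that $q\delta\|\eta_{1}\|<\sqrt{2\theta}$ and $r\delta\|\eta_{2}\|<\sqrt{2\theta}$. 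Each factor then lies in the corresponding $L^{p},L^{q},L^{r}$ by the integrability of $e^{|\langle\cdot,\psi\rangle|}$ established in Lemma \ref{Lap in 1} whenever $\|\psi\|<\sqrt{2\theta}$. This majorant legitimizes holomorphy of $g$ on $|z|<\delta$, hence the G-holomorphy of $\ell_{\nu}$.

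Combining local boundedness with G-holomorphy and the criterion of Appendix B then yields holomorphy of $\ell_{\nu}$ on $\mathcal{U}_{\theta}$ (and, by the identity theorem, $\ell_{\nu}$ must coincide there with the analytic continuation of the real formula of Lemma \ref{Lap in 1}). I expect the main obstacle to be the construction of the uniform integrable majorant, in particular organizing the three-way H\"older split so that the exponent attached to $\xi_{1}^{0}$ can be taken strictly above $1$ while $\delta$ is shrunk to absorb the contributions of $\eta_{1}$ and $\eta_{2}$; once the majorant is in place, the passage to holomorphy of the integral is routine.
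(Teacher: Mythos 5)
Your proposal is correct and follows essentially the same route as the paper's proof: well-definedness and local boundedness of $\ell_{\nu}$ on $\mathcal{U}_{\theta}$ via Lemma \ref{Lap in 1} (using $|e^{\langle x,\xi\rangle}|=e^{\langle x,\xi_{1}\rangle}$), G-holomorphy along complex lines via an integrable majorant together with Morera's theorem and Fubini, and the final conclusion by the criterion of Lemma \ref{Gholobound}. The only difference is technical and cosmetic: the paper manufactures its dominating function by shrinking the disc radius to $r<(\sqrt{2\theta}-\Vert\xi_{1}\Vert)/(3(\Vert\eta_{1}\Vert+\Vert\eta_{2}\Vert))$ and dominating by $e^{\langle x,(\xi+3r\eta)_{1}\rangle}$, whereas you build the majorant by a three-exponent H\"older split — an equally valid (and arguably cleaner) way to achieve the same step.
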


\begin{proof}
We show that it is bounded on $\mathcal{U}_{\theta }$. Let $\xi \in \mathcal{%
U}_{\theta }$, then we have that

\begin{equation*}
|\ell _{\nu }(\xi )|\leq \int_{\mathcal{S}^{\prime }}|e^{\langle x,\xi
\rangle }|d\nu _{\rho ,\theta }(x)
\end{equation*}%
Noting that $|e^{\langle x,\xi \rangle }|=|e^{\langle x,\xi _{1}\rangle
}||e^{-i\langle x,\xi _{2}\rangle }|=|e^{\langle x,\xi _{1}\rangle
}|=e^{\langle x,\xi _{1}\rangle }$, for $\xi =\xi _{1}+i\xi _{2},$ we get
\begin{equation*}
\int_{\mathcal{S}^{\prime }}|e^{\langle x,\xi \rangle }|d\nu _{\rho ,\theta
}(x)=\int_{\mathcal{S}^{\prime }}e^{\langle x,\xi _{1}\rangle }d\nu _{\rho
,\theta }(x)=\frac{\Gamma (\rho ,\theta -1/2\Vert \xi _{1}\Vert ^{2})}{%
\Gamma (\rho ,\theta )}<\infty ,
\end{equation*}%
by using Lemma \ref{Lap in 1}, for the second equality.\newline Now we show that, for $%
\xi =\xi _{1}+i\xi _{2}\in \mathcal{U}_{\theta }$, $\eta =\eta
_{1}+i\eta
_{2}\in \mathcal{S}_{\mathbb{C}}$ and $z\in B_{r}(0)$ where $0<r<\frac{\sqrt{%
2\theta }-\Vert \xi _{1}\Vert }{3(\Vert \eta _{1}\Vert +\Vert \eta _{2}\Vert
)}$, the function $\mathbb{C}\supset B_{r}(0)\ni z\mapsto \ell _{\nu }(\xi
+z\eta )=:f(z)\in \mathbb{C}$ is continous. The radius length is such that,
for all $\lambda \in B_{r}(0)$, we have $\xi +\lambda \eta \in \mathcal{U}%
_{\theta }$. \newline
We take $\{z_{n}\}_{n\in \mathbb{N}}\subset B_{r}(0)$ such that $%
z_{n}\rightarrow z$, for $n\rightarrow \infty $. Denoting by
$(\cdot )_{1}$ the real part of a function in $\mathcal{S}_\C$, we have that

\begin{eqnarray*}
|f(z)-f(z_n)|&\leq& \int_{\mathcal{S}^{\prime}}|e^{\langle x, \xi + z \eta
\rangle}-e^{\langle x, \xi + z_n \eta \rangle} |d\nu_{\rho,\theta}(x) \\
&\leq& \int_{\mathcal{S}^{\prime}}|e^{\langle x, \xi\rangle}||e^{\langle x
,z \eta \rangle}-e^{\langle x, z_n \eta\rangle}|d\nu_{\rho,\theta}(x) \\
&\leq& \int_{\mathcal{S}^{\prime}}e^{\langle x, \xi_1\rangle + \langle x,
(z_n\eta)_1 + ((z-z_n)\eta)_1 \rangle}d\nu_{\rho,\theta}(x)
\end{eqnarray*}
We note that $|e^{\langle x ,z \eta \rangle}-e^{\langle x, z_n
\eta\rangle}|=|e^{ \langle x,z_n \eta\rangle}||e^{\langle x,
(z-z_n)\eta \rangle}-1|$. Moreover, for sufficiently large $n$, we
have that $|e^{\langle x, (z-z_n)\eta \rangle}-1|\leq e^{\langle
x, ((z-z_n)\eta)_1 \rangle}$. Since $|z_n| +
|z-z_n|<3r$ for each $n$, we can ensure that $\xi + 3r\eta \in \mathcal{U}%
_{\theta}$, so that $e^{\langle x, (\xi + 3r\eta)_1 \rangle}\in L^1(\mathcal{S%
}^{\prime},\sigma^*,\nu_{\rho,\theta})$. Hence, we can apply the
dominated convergence theorem to gain the continuity of $f$ in $z
\in B_{r}(0)$, as follows

\begin{eqnarray*}
\lim_{n \to \infty}|f(z)-f(z_n)|&\leq& \lim_{n \to \infty} \int_{\mathcal{S}%
^{\prime}}|e^{\langle x, \xi_1\rangle}||e^{\langle x ,z \xi_2
\rangle}-e^{\langle x, z_n \xi_2\rangle}|d\nu_{\rho,\theta}(x) \\
&=&\int_{\mathcal{S}^{\prime}}\lim_{n \to \infty}|e^{\langle x,
\xi_1\rangle}||e^{z \langle x , \xi_2 \rangle}-e^{z_n \langle x,
\xi_2\rangle}|d\nu_{\rho,\theta}(x)=0.
\end{eqnarray*}
Now we apply the Morera's theorem to show that $f(z)$ is
holomorphic, which means that $\ell_{\nu}(\xi)$ is G-holomorphic
on $\mathcal{U}_{\theta}$ (see Definition B.3 in Appendix). Let
$\gamma$ be a closed and bounded curve in $B_{r}(0)\subset
\mathbb{C}$, since $\gamma$ is compact and
$\int_{\mathcal{S}^{\prime}}e^{\langle x, \xi + z\eta
\rangle}d\nu_{\rho,\theta}(x)<\infty$, we can use the Fubini
theorem to get:
\begin{equation*}
\int_{\gamma}\int_{\mathcal{S}^{\prime}}e^{\langle x, \xi + z\eta
\rangle}d\nu_{\rho,\theta}(x)dz=\int_{\mathcal{S}^{\prime}}\int_{\gamma}e^{%
\langle x, \xi + z\eta \rangle}dz d\nu_{\rho,\theta}(x)=0
\end{equation*}
as the exponential function is holomorphic. By the Morera's theorem and by
Lemma \ref{Gholobound} in Appendix B, we have that $\ell_{\nu}$ is
holomorphic on $\mathcal{U}_\theta$.
\end{proof}

\begin{remark}
It is easy to check that $B_{r_1}(0)\oplus i B_{r_2}(0)$ is an open sen in
the topology induced by $\langle \cdot, \cdot \rangle_{\mathcal{H}_\C}$, as
follows: let us define the projections of an element of $\mathcal{H}_{%
\mathbb{C}}$ as
\begin{equation*}
\pi_1:\mathcal{H}_{\mathbb{C}} \to \mathcal{H}: \xi_1+i\xi_2 \mapsto \xi_1
\end{equation*}
and
\begin{equation*}
\pi_2:\mathcal{H}_{\mathbb{C}} \to \mathcal{H}: \xi_1+i\xi_2 \mapsto \xi_2.
\end{equation*}
Let $x \in B_{r_1}(0)\oplus i B_{r_2}(0)$, then we have $\pi_1(x)\in
B_{r_1}(0) $ and $\pi_2(x)\in B_{r_2}(0)$. The sets $B_{r_1}(0)$ and $%
B_{r_2}(0)$ are open in the topology of $\mathcal{H}$; then $\exists
\epsilon_1, \epsilon_2 >0$ such that $B_{\epsilon_1}(\pi_1(x)) \subset
B_{r_1}(0)$ and $B_{\epsilon_2}(\pi_2(x)) \subset B_{r_2}(0)$. Let $\epsilon
= \min\{\epsilon_1, \epsilon_2\}$, then we have that $B^{\mathbb{C}%
}_{\epsilon}(x) \subset B_{r_1}(0)\oplus i B_{r_2}(0)$.
\end{remark}

In order to verify that C2 is satisfied by
$\nu_{\rho,\theta}$, we prove that, for $\rho \in (0,1)$ and
$\theta >0$, they are always strictly positive on non-empty, open
subsets, by resorting to their representation as mixture of
Gaussian measures.

\begin{theorem}
For any open, non-empty set $U\subset \mathcal{S}^{\prime}$ and
for any $\rho \in (0,1)$, $\theta >0$,
we have that $%
\nu_{\rho,\theta}(U)>0$.
\end{theorem}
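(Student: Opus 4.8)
The plan is to realize $\nu_{\rho,\theta}$ as a continuous mixture of centered Gaussian (white noise) measures and then to exploit that each such Gaussian measure charges every non-empty open subset of $\mathcal{S}^{\prime}$. Starting from the Bernstein representation (\ref{1dim}) and setting $\eta=\frac{1}{2}\langle\xi,\xi\rangle$, I would write, for every $\xi\in\mathcal{S}$,
\[
\int_{\mathcal{S}^{\prime}}e^{i\langle x,\xi\rangle}\,d\nu_{\rho,\theta}(x)
=\frac{\Gamma(\rho,\theta+\frac{1}{2}\langle\xi,\xi\rangle)}{\Gamma(\rho,\theta)}
=\int_{0}^{+\infty}e^{-\frac{s}{2}\langle\xi,\xi\rangle}\,d\mu_{\rho,\theta}(s).
\]
For each $s>0$ the factor $e^{-\frac{s}{2}\langle\xi,\xi\rangle}$ is the characteristic functional of the centered Gaussian white noise measure $\gamma_{s}$ on $\mathcal{S}^{\prime}$ with covariance $s\langle\cdot,\cdot\rangle$; concretely $\gamma_{s}$ is the pushforward of the standard white noise $\nu=\nu_{1,\theta}$ under the dilation $x\mapsto\sqrt{s}\,x$.

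Next I would define the set function $\widetilde{\nu}(A):=\int_{0}^{+\infty}\gamma_{s}(A)\,d\mu_{\rho,\theta}(s)$ for $A\in\sigma^{\ast}$. Since $s\mapsto\gamma_{s}(A)$ is measurable (it is continuous in $s$ on the $\pi$-system of cylinder sets, where $\gamma_{s}(A)$ is the measure of a fixed finite-dimensional Borel set under the covariance scaling $s$, and one extends to all of $\sigma^{\ast}$ by a Dynkin-class argument) and $\mu_{\rho,\theta}$ is a probability measure, $\widetilde{\nu}$ is a probability measure on $(\mathcal{S}^{\prime},\sigma^{\ast})$ by monotone convergence. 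By Fubini its characteristic functional equals the right-hand side of the displayed identity, hence coincides with that of $\nu_{\rho,\theta}$; uniqueness in the Bochner–Minlos theorem then forces
\[
\nu_{\rho,\theta}(A)=\int_{0}^{+\infty}\gamma_{s}(A)\,d\mu_{\rho,\theta}(s),\qquad A\in\sigma^{\ast}.
\]

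The core step is that each $\gamma_{s}$, $s>0$, has full topological support in $\mathcal{S}^{\prime}$: for a centered Gaussian measure the support equals the closure of its Cameron–Martin space, which here is $\mathcal{H}=L^{2}(\mathbb{R})$; since $\mathcal{H}$ is dense in $\mathcal{S}^{\prime}$, the support is all of $\mathcal{S}^{\prime}$, so $\gamma_{s}(U)>0$ for every non-empty open $U$. Equivalently, on a basic cylinder neighbourhood $U=\{x:|\langle x-x_{0},\xi_{i}\rangle|<\varepsilon,\ i=1,\dots,k\}$ with $x_{0}\in\mathcal{S}^{\prime}$, $\xi_{i}\in\mathcal{S}$, one may reduce to a linearly independent family $\xi_{1},\dots,\xi_{k}$ (the pairing being linear in the second slot, any dependent constraint is implied by the others once $\varepsilon$ is small) and observe that the image of $\gamma_{s}$ under $x\mapsto(\langle x,\xi_{1}\rangle,\dots,\langle x,\xi_{k}\rangle)$ is a non-degenerate centered Gaussian on $\mathbb{R}^{k}$ with positive-definite Gram covariance $s(\langle\xi_{i},\xi_{j}\rangle)_{i,j}$, which charges the open box around $(\langle x_{0},\xi_{i}\rangle)_{i}$.

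Finally, since $\frac{\Gamma(\rho,\theta+\eta)}{\Gamma(\rho,\theta)}$ is strictly decreasing in $\eta$, the measure $\mu_{\rho,\theta}$ is not the Dirac mass at $0$, so $\mu_{\rho,\theta}((0,+\infty))>0$. For any non-empty open $U$ the integrand $s\mapsto\gamma_{s}(U)$ is strictly positive on $(0,+\infty)$, whence
\[
\nu_{\rho,\theta}(U)=\int_{0}^{+\infty}\gamma_{s}(U)\,d\mu_{\rho,\theta}(s)\geq\int_{(0,+\infty)}\gamma_{s}(U)\,d\mu_{\rho,\theta}(s)>0.
\]
The main obstacle I anticipate is the rigorous passage from the equality of characteristic functionals to the measure-level mixture identity, namely the measurability of $s\mapsto\gamma_{s}(A)$ together with the Fubini and Bochner–Minlos uniqueness step; once the mixture representation is secured, strict positivity follows from the elementary full-support property of finite-dimensional non-degenerate Gaussians.
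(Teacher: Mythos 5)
Your proposal is correct, and its core is exactly the paper's: you establish the Gaussian-mixture (elliptically contoured) representation $\nu_{\rho,\theta}=\int_0^\infty \gamma_s\, d\mu_{\rho,\theta}(s)$, which is precisely identity (\ref{Gaussianity}) in the paper, verified in both cases through the Bernstein representation (\ref{1dim}) of the characteristic functional. The difference lies in what happens afterwards: the paper at that point simply invokes Theorem 4.5 of \cite{GRO}, which asserts that elliptically contoured measures of this kind are strictly positive on non-empty open sets, whereas you prove that step yourself — upgrading the equality of characteristic functionals to a measure-level identity (measurability of $s\mapsto\gamma_s(A)$ via a Dynkin-class argument, Fubini, and Bochner--Minlos uniqueness), establishing full topological support of each $\gamma_s$, $s>0$, through the Cameron--Martin space or, equivalently, non-degenerate finite-dimensional Gaussian marginals on cylinder sets, and finally noting that $\mu_{\rho,\theta}\neq\delta_0$ because its Laplace transform is non-constant. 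What the paper's route buys is brevity; what yours buys is a self-contained argument that makes visible the two hypotheses the citation conceals, namely that the mixing measure assigns positive mass to $(0,+\infty)$ and that each Gaussian component charges every non-empty open set. Your argument is sound throughout; the only place deserving slightly more care is the reduction to a linearly independent family of test functions in the cylinder-set argument, where a dependent constraint is controlled by shrinking the radius on the independent coordinates (by a factor depending on the coefficients of the linear relation) rather than being implied at the same $\varepsilon$, but this is a routine fix.
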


\begin{proof}
By applying Theorem 4.5 in \cite{GRO}, it is sufficient to prove that $%
\nu_{\rho,\theta}$ is an elliptically contoured measure, i.e. if we denote
by $\mu^{s}$ the centered Gaussian measure on $\mathcal{S}^{\prime}$ with
variance $s>0$, the following holds:
\begin{equation}  \label{Gaussianity}
\nu_{\rho,\theta}= \int_{0}^{\infty} \mu^{s} d \mu_{\rho,\theta}(s),
\end{equation}
where $\mu_{\rho,\theta}$ is the measure defined on $(0,\infty)$ by (\ref%
{1dim}). The identity in (\ref{Gaussianity}) can be checked by considering
that

\begin{equation*}
\int_{\mathcal{S}^{\prime}}e^{i \langle x, \xi \rangle} d\mu^s(x)=\exp\big\{ %
-\frac{s}{2}\langle \xi, \xi \rangle\big\}, \quad \xi \in \mathcal{S}
\end{equation*}
and thus, by (\ref{1dim}),

\begin{equation}
\int_{0}^{\infty} \exp\big\{ -\frac{s}{2} \langle \xi, \xi \rangle\big\} d
\mu_{\rho,\theta}(s)=\frac{\Gamma(\rho, \theta + \frac{1}{2} \langle \xi,
\xi \rangle)}{\Gamma(\rho,\theta)},  \label{ec}
\end{equation}
which coincides with $\int_{\mathcal{S}^{\prime}} e^{i \langle x, \xi
\rangle}d\nu_{\rho,\theta}(x)$.
\end{proof}

\begin{remark}
	For $\rho=1$, C1 and C2 are satisfied because $\nu_{1,\theta}$ is Gaussian, for each $\theta\geq 0$.
\end{remark}
\section{The tempered Gamma-grey Brownian motion as generalized
stochastic process}

We can now consider the fractional operator $M_{-}^{\alpha /2}$ defined, for
any $f\in \mathcal{S}$, as
\begin{equation*}
M_{-}^{\alpha /2}f:=\left\{
\begin{array}{l}
\sqrt{C_{\alpha }}D_{\_}^{(1-\alpha )/2}f,\qquad \alpha \in (0,1) \\
f,\qquad \qquad \qquad \qquad \alpha =1 \\
\sqrt{C_\alpha}I_{\_}^{(\alpha -1)/2}f, \qquad \alpha \in (1,2)%
\end{array}
\right. ,
\end{equation*}
where
\begin{equation*}
D_{\_}^{\beta }\,f(x):=-\frac{1}{\Gamma (1-\beta )}\frac{d}{dx}
\int_{x}^{\infty }f(t)(t-x)^{-\beta }dt,\quad x\in \mathbb{R},\;\beta \in
(0,1),
\end{equation*}
is the Riemann-Liouville fractional derivative and

\begin{equation*}
I_{\_}^{\beta }\,f(x):=\frac{1}{\Gamma (\beta )} \int_{x}^{\infty
}f(t)(t-x)^{\beta-1 }dt,\quad x\in \mathbb{R},\;\beta \in (0,1),
\end{equation*}
is the Riemann-Liouville fractional integral.

We extend the dual pairing $\left\langle \cdot ,\cdot \right\rangle $ to $%
\mathcal{S}^{\prime }(\mathbb{R})\times L^{2}\left( \mathbb{R},dx\right) $
and by considering that $M_{-}^{\alpha /2}1_{[0,t)}\in L^{2}\left( \mathbb{R}%
,dx\right)$, where $1_{[a,b)}$ is the indicator function of $[a,b)$, we
introduce the tempered $\Gamma$-grey Brownian motion (hereafter $\Gamma$%
-GBM) as follows:

\begin{definition}
Let $\alpha \in (0,2)$, $\rho \in (0,1]$ and $\theta>0$. The tempered $%
\Gamma $-GBM is defined on the probability space $\left( \mathcal{S}^{\prime
}(\mathbb{R}),\sigma^*,\nu _{\rho ,\theta }\right) $ as the generalized
process
\begin{equation}
B_{\alpha ,\rho }^{\theta }(t,\omega ):=\left\langle \omega ,M_{-}^{\alpha
/2}1_{[0,t)}\right\rangle ,\qquad t\geq 0,\;\omega \in \mathcal{S}^{\prime
}( \mathbb{R}).  \label{i4}
\end{equation}
\end{definition}

We notice that for each $t$, $B_{\alpha ,\rho }^{\theta }(t,\cdot) \in
L^{2}(\nu_{\rho,\theta})$.

\begin{remark}
For $\rho=1$, $\alpha=1$ and for each $\theta$, we have that $%
B^{\theta}_{1,1}$ is a Brownian Motion, indeed for each $t$, $%
B_t(\omega)=\langle \omega, 1_{[0,t)}\rangle \in L^2(\nu)$ where $\nu$ is
Gaussian.
\end{remark}

In order to study the continuity of this process, we recall the following
relationship obtained in \cite{GRO2}:
\begin{equation}
\left\langle M_{-}^{\alpha /2}\xi ,M_{-}^{\alpha /2}\eta \right\rangle
_{L^{2}(\mathbb{R},dx)}=C_{\alpha }\int_{\mathbb{R}}|x|^{1-\alpha }\widehat{%
\xi }(x)\widehat{\eta }(x)dx,\qquad \xi ,\eta \in \mathcal{S}(\mathbb{R}),
\label{gro}
\end{equation}%
where $\widehat{f}(x):=\frac{1}{\sqrt{2\pi }}\int_{\mathbb{R}%
}e^{i\left\langle \omega ,x\right\rangle }f(\omega )d\omega $ denotes the
Fourier transform of $f(\cdot )$ (see also \cite{MIS}, for details).

It is proved in \cite{GRO2} that (\ref{gro}) holds not only on $\mathcal{S}$%
, but also for indicator functions and that%
\begin{equation*}
\left\langle M_{-}^{\alpha /2}1_{[0,t)},M_{-}^{\alpha
/2}1_{[0,s)}\right\rangle _{L^{2}(\mathbb{R},dx)}=\frac{1}{2}(s^{\alpha
}+t^{\alpha }-|t-s|^{\alpha }).
\end{equation*}

Similarly to what was done in \cite{GRO2} for the ggBm, it is easy
to prove the following result.

\begin{theorem}
For $\alpha \in (0,2),$ $\rho \in (0,1]$ and $\theta >0,$ the tempered $%
\Gamma$-GBM has a $\gamma$-H\"{o}lder continuous version with
$\gamma < \alpha /2$.
\end{theorem}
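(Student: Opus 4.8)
The plan is to establish the hypotheses of the Kolmogorov--Chentsov continuity criterion on each finite interval $[0,T]$ and then to patch the resulting versions together. Fix $0\leq s<t$. Since $1_{[0,t)}-1_{[0,s)}=1_{[s,t)}$ and $M_{-}^{\alpha /2}$ is linear, the increment is of the simple form
\begin{equation*}
B_{\alpha ,\rho }^{\theta }(t,\omega )-B_{\alpha ,\rho }^{\theta }(s,\omega )=\langle \omega ,M_{-}^{\alpha /2}1_{[s,t)}\rangle ,
\end{equation*}
that is, $\langle \omega ,\xi \rangle $ with $\xi =M_{-}^{\alpha /2}1_{[s,t)}\in L^{2}(\R ,dx)$. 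Consequently its even moments are governed by formula (\ref{ni}) of Corollary \ref{cor1}, and its odd moments vanish.

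First I would compute the $L^{2}$-norm of the increment. Expanding $\Vert M_{-}^{\alpha /2}1_{[s,t)}\Vert ^{2}$ by bilinearity and inserting the covariance identity of \cite{GRO2}, namely $\langle M_{-}^{\alpha /2}1_{[0,t)},M_{-}^{\alpha /2}1_{[0,s)}\rangle _{L^{2}(\R ,dx)}=\tfrac{1}{2}(s^{\alpha }+t^{\alpha }-|t-s|^{\alpha })$, the three cross terms telescope and leave
\begin{equation*}
\Vert M_{-}^{\alpha /2}1_{[s,t)}\Vert _{L^{2}(\R ,dx)}^{2}=t^{\alpha }-(t^{\alpha }+s^{\alpha }-(t-s)^{\alpha })+s^{\alpha }=|t-s|^{\alpha }.
\end{equation*}
Note that no case distinction between $\alpha \in (0,1)$ and $\alpha \in (1,2)$ is needed here, since the covariance identity is valid throughout $(0,2)$.

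Substituting $\langle \xi ,\xi \rangle =|t-s|^{\alpha }$ into (\ref{ni}) gives, for every $n\in \N$,
\begin{equation*}
\mathbb{E}_{\nu _{\rho ,\theta }}\big|B_{\alpha ,\rho }^{\theta }(t)-B_{\alpha ,\rho }^{\theta }(s)\big|^{2n}=C_{n,\rho ,\theta }\,|t-s|^{\alpha n},\qquad C_{n,\rho ,\theta }=\frac{(-1)^{n+1}(2n)!\,\Gamma (\rho )\theta ^{\rho -n}}{n!\,2^{n}\Gamma (\rho ,\theta )}E_{1,\rho +1-n}^{\rho }(-\theta ),
\end{equation*}
where $C_{n,\rho ,\theta }$ is finite (all moments of $\nu _{\rho ,\theta }^{1}$ are finite by Lemma \ref{moments}) and automatically non-negative, being a $2n$-th absolute moment.

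Finally I would invoke the Kolmogorov--Chentsov theorem with exponent $p=2n$. Choosing $n$ large enough that $\alpha n>1$, the estimate reads $\mathbb{E}|B_{\alpha ,\rho }^{\theta }(t)-B_{\alpha ,\rho }^{\theta }(s)|^{2n}\leq C_{n,\rho ,\theta }|t-s|^{1+(\alpha n-1)}$, which produces a continuous version on $[0,T]$ that is $\gamma $-H\"{o}lder for every
\begin{equation*}
\gamma <\frac{\alpha n-1}{2n}=\frac{\alpha }{2}-\frac{1}{2n}.
\end{equation*}
Letting $n\to \infty $ drives the right-hand side to $\alpha /2$ from below, so any prescribed $\gamma <\alpha /2$ is attained by a suitable $n$; since $T$ is arbitrary, the locally H\"{o}lder versions coincide on overlaps and patch to a version for all $t\geq 0$. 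The argument is essentially routine once Corollary \ref{cor1} is in hand; the only point deserving attention is that the constant $C_{n,\rho ,\theta }$ be finite and of the correct sign, which is precisely what the explicit moment formula guarantees.
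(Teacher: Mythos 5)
Your proposal is correct and follows essentially the same route as the paper: write the increment as $\langle \omega ,M_{-}^{\alpha /2}1_{[s,t)}\rangle$, feed $\Vert M_{-}^{\alpha /2}1_{[s,t)}\Vert _{L^{2}}^{2}=|t-s|^{\alpha }$ into the moment formula (\ref{ni}), and apply the Kolmogorov continuity criterion. Your two small deviations are in fact refinements of the paper's argument: you get non-negativity of the constant for free (it multiplies a $2n$-th moment), where the paper runs a separate complete-monotonicity computation with the Prabhakar function, and you do the exponent bookkeeping correctly --- a fixed $n$ only yields $\gamma <(\alpha n-1)/(2n)$, and one must let $n\to \infty$ to reach every $\gamma <\alpha /2$, a point the paper glosses over by writing $\gamma <(q+1)/(2n)$.
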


\begin{proof}
In order to apply the Kolmogorov's continuity theorem, we only need to show
that

\begin{equation}
\mathbb{E}_{\nu _{\rho ,\theta }}\left( \left( B_{\alpha ,\rho }^{\theta
}(t)-B_{\alpha ,\rho }^{\theta }(s)\right) ^{2n}\right) \leq K|t-s|^{q+1},
\label{kk}
\end{equation}
for some $q>0$ and $s,t\geq 0.$ By definition and by recalling (\ref{ni}),
we have that, for $s<t,$
\begin{eqnarray*}
&&\mathbb{E}_{\nu _{\rho ,\theta }}\left( \left\vert B_{\alpha ,\rho
}^{\theta }(t)-B_{\alpha ,\rho }^{\theta }(s)\right\vert ^{2n}\right) \\
&=&\int_{\mathcal{S}^{\prime }(\mathbb{R})}\left\langle \omega
,M_{-}^{\alpha /2}1_{[s,t)}\right\rangle ^{2n}d\nu _{\rho ,\theta }(\omega )
\\
&=&\frac{(-1)^{n+1}(2n)!\Gamma (\rho )\theta ^{\rho -n}}{n!2^n\Gamma (\rho
,\theta )}E_{1,\rho +1-n}^{\rho }\left( -\theta \right) \left\langle
M_{-}^{\alpha /2}1_{[s,t)},M_{-}^{\alpha /2}1_{[s,t)}\right\rangle ^{n} \\
.
\end{eqnarray*}
We now prove that
\begin{equation*}
K_{\theta , \rho}^{n}:=\frac{(-1)^{n+1}(2n)!\Gamma (\rho )\theta ^{\rho -n}}{%
n!2^n\Gamma (\rho ,\theta )}E_{1,\rho +1-n}^{\rho }\left( -\theta \right)
\end{equation*}
is positive, for any $n$, by considering (\ref{di}), as follows
\begin{eqnarray*}
(-1)^{n+1}\theta ^{\rho -n}E_{1,\rho +1-n}^{\rho }\left( -\theta \right)
&=&(-1)^{n+1}\frac{d^{n-1}}{d\theta ^{n-1}}\left[ \theta ^{\rho -1}E_{1,\rho
}^{\rho }\left( -\theta \right) \right] \\
&=&(-1)^{n+1}\sum_{j=0}^{n-1}\binom{n-1}{j}\frac{d^{j}}{d\theta ^{j}}\left[
\theta ^{\rho -1}\right] \frac{d^{n-1-j}}{d\theta ^{n-1-j}}\frac{e^{-\theta }%
}{\Gamma (\rho)} \\
&=&\frac{1}{\Gamma (\rho)}\sum_{j=0}^{n-1}\binom{n-1}{j}\left\{ (-1)^{j}%
\frac{d^{j}}{d\theta ^{j}} \left[ \theta ^{\rho -1}\right] \right\} \left\{
(-1)^{n-1-j}\frac{d^{n-1-j} }{d\theta ^{n-1-j}}e^{-\theta }\right\} \geq 0.
\end{eqnarray*}
In the last step we resorted to the complete monotonicity of both the
Prabhakar function (in the special case $\beta =\gamma =\rho <1$) and of $
\theta ^{\rho -1},$ for $\rho <1.$

We now apply Proposition 3.8 of \cite{GRO2}, which shows that
$\left\langle M_{-}^{\alpha /2}1_{[s,t)},M_{-}^{\alpha
/2}1_{[s,t)}\right\rangle ^{n}=(t-s)^{\alpha n}$, so that
(\ref{kk}) holds for $q=\alpha n-1>0$. The case $s>t$ can be
treated analogously, so that the sufficient condition of the
Kolmogorov's continuity theorem is satisfied and the
H\"{o}lder-continuity parameter is $\gamma < \frac{q+1}{2n} =
\frac{\alpha}{2}$.
\end{proof}

\begin{remark}
The previous result agrees with the well-known $\gamma$-H\"{o}lder
continuity of the fractional Brownian motion with $\gamma <H$.
\end{remark}

\bigskip

\section{Finite-dimensional characterization of the tempered Gamma-grey
Brownian motion}

This section is devoted to the finite dimensional characterization of the
generalized process $B^\theta_{\rho,\alpha}$. We recall that, in order to
overcome the lack of moments, we introduced the tempering factor $\theta$;
we give the following definition of the process in the Euclidean space, in
terms of its $n$-times characteristic function thanks to the $\sigma^*$
algebra.

\begin{definition}
\label{d1} Let $\alpha \in (0,2)$, $\rho \in (0,1]$ and $\theta \geq 0$.
Let, for any $\xi _{k}\in \mathbb{R}$, $k=1,...,n$ and $n\in \mathbb{N},$
\begin{equation}
\Phi _{\alpha ,\rho }^{\theta}(\xi _{1},...,\xi _{n};t_{1},...t_{n})=\frac{%
\Gamma \left( \rho , \theta + \frac{1}{2}\sum_{j,k=1}^{n}\xi _{j}\xi
_{k}\gamma _{\alpha }(t_{j},t_{k}) \right) }{\Gamma (\rho , \theta )},
\label{ll3}
\end{equation}
where $\gamma _{\alpha }(t_{j},t_{k})=t_{k}^{\alpha }+t_{j}^{\alpha
}-|t_{k}-t_{j}|^{\alpha }$ and $0\leq t_{1}\leq ...\leq t_{n}<\infty .$
Then, the process with characteristic function (\ref{ll3}), will be denoted
(as its infinite-dimensional counterpart) as $B_{\alpha ,\rho }^{\theta
}:=\left\{ B_{\alpha ,\rho }^{\theta }(t),t\geq 0\right\} .$
\end{definition}

Thanks to the next result, we can express the tempered $\Gamma$-GBM as a
product of a random variable and a fractional Brownian motion, under the
assumption that they are mutually independent.

\begin{theorem}
For $\alpha \in (0,2)$, $\rho \in (0,1]$ and $\theta>0$, the following
equality of all the finite-dimensional distribution (denoted by $\overset{%
f.d.d.}{=}$) holds
\begin{equation}
B_{\alpha ,\rho }^{\theta }(t)\overset{f.d.d.}{=}\sqrt{Y_{\theta }^{\rho }}%
B^{\alpha /2}(t),\qquad t\geq 0,  \label{ggbm}
\end{equation}
where $B^{\alpha /2}:=\{B^{\alpha /2}(t),t\geq 0\}$ is the fractional
Brownian motion with Hurst-parameter $H=\alpha /2$, for $\alpha \in (0,2)$
and $Y_{\theta}^{\rho }$ is the r.v. with density
\begin{equation}
l^{\theta}_{\rho}(y)=\frac{1}{\Gamma (\rho ,\theta )\Gamma (1-\rho )}\frac{%
e^{-\theta y}}{y(y-1 )^{\rho }}1_{y>1 },\qquad \rho \in (0,1),\theta \geq 0.
\label{ll}
\end{equation}
independent from $B^{\alpha /2}.$
\end{theorem}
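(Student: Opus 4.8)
The plan is to prove the factorization in \eqref{ggbm} by matching finite-dimensional characteristic functions. Since the finite-dimensional distribution of $B_{\alpha,\rho}^\theta$ is \emph{defined} through its $n$-times characteristic function \eqref{ll3}, it suffices to show that the right-hand side $\sqrt{Y_\theta^\rho}\,B^{\alpha/2}(t)$ has exactly the same $n$-times characteristic function. I would compute the joint characteristic function of $(\sqrt{Y_\theta^\rho}\,B^{\alpha/2}(t_1),\dots,\sqrt{Y_\theta^\rho}\,B^{\alpha/2}(t_n))$ by conditioning on $Y_\theta^\rho$ first (exploiting the assumed independence) and then integrating against the density $l_\rho^\theta$. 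The Gaussian structure of fractional Brownian motion does the work inside the conditional expectation, and the outer integral is where the incomplete gamma function must reappear.

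\textbf{Key steps.}
First I would condition on $Y_\theta^\rho=y$. For fixed $y$, the vector $\sqrt{y}\,(B^{\alpha/2}(t_1),\dots,B^{\alpha/2}(t_n))$ is centered Gaussian with covariance matrix $y\,\Sigma$, where $\Sigma_{jk}=\tfrac12\gamma_\alpha(t_j,t_k)$ is the fractional-Brownian-motion covariance with Hurst parameter $H=\alpha/2$; recall $\mathbb{E}[B^{\alpha/2}(t_j)B^{\alpha/2}(t_k)]=\tfrac12(t_j^\alpha+t_k^\alpha-|t_j-t_k|^\alpha)=\tfrac12\gamma_\alpha(t_j,t_k)$. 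Hence the conditional characteristic function is
\begin{equation*}
\mathbb{E}\Big[e^{i\sum_j \xi_j\sqrt{y}\,B^{\alpha/2}(t_j)}\,\Big|\,Y_\theta^\rho=y\Big]
=\exp\Big(-\frac{y}{2}\sum_{j,k=1}^n\xi_j\xi_k\,\gamma_\alpha(t_j,t_k)\Big).
\end{equation*}
Writing $u:=\tfrac12\sum_{j,k}\xi_j\xi_k\gamma_\alpha(t_j,t_k)\geq 0$ (nonnegativity follows from the positive semidefiniteness of the fractional-Brownian-motion covariance), the unconditional characteristic function becomes $\int_1^\infty e^{-uy}\,l_\rho^\theta(y)\,dy$.

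\textbf{The main computation.}
The crux is to verify the Laplace-type identity
\begin{equation*}
\int_1^\infty e^{-uy}\,l_\rho^\theta(y)\,dy=\frac{\Gamma(\rho,\theta+u)}{\Gamma(\rho,\theta)},\qquad u\geq 0,
\end{equation*}
which would immediately match \eqref{ll3} upon substituting back the value of $u$. Plugging in the density \eqref{ll}, the left-hand side equals
\begin{equation*}
\frac{1}{\Gamma(\rho,\theta)\Gamma(1-\rho)}\int_1^\infty \frac{e^{-(\theta+u)y}}{y\,(y-1)^\rho}\,dy,
\end{equation*}
so I must show that this integral equals $\Gamma(\rho,\theta+u)\,\Gamma(1-\rho)$. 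I expect this to reduce, after the substitution $y\mapsto y-1$ or a Mellin/Laplace argument, to the known integral representation connecting the upper incomplete gamma function with the density $f_\rho$ of \eqref{kl}: indeed \eqref{1dim} already identifies $\Gamma(\rho,\theta+\eta)/\Gamma(\rho,\theta)$ as the Laplace transform of the mixing measure $\mu_{\rho,\theta}$, and the change of variables $s=1/y$ should transform $\mu_{\rho,\theta}$ (supported on $(0,\infty)$) into the law of $Y_\theta^\rho$ on $(1,\infty)$. Matching these two representations is the heart of the proof.

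\textbf{The main obstacle.}
The main difficulty is precisely this Laplace identity: verifying that the explicit density \eqref{ll} is the correct push-forward of the Bernstein mixing measure $\mu_{\rho,\theta}$ under $s\mapsto 1/s$, so that the Gaussian-variance-mixture representation of Theorem~3.2 (the elliptically contoured structure $\nu_{\rho,\theta}=\int_0^\infty\mu^s\,d\mu_{\rho,\theta}(s)$) is reconciled with the time-$t$ marginal here. The cleanest route is to start from the inverse Laplace transform \eqref{kl} of $\Gamma(\rho,\cdot)$, incorporate the tempering shift $\theta$ (which multiplies the density by $e^{-\theta s}$ up to normalization), and then perform the reciprocal change of variable to land on $l_\rho^\theta$. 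Once the identity for $u$ is established, the passage from the one-point to the full $n$-point characteristic function is automatic, since $u$ already encapsulates the entire quadratic form $\tfrac12\sum_{j,k}\xi_j\xi_k\gamma_\alpha(t_j,t_k)$; no further work beyond bookkeeping is required, and the equality of all finite-dimensional distributions follows by the uniqueness of characteristic functions.
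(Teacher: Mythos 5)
Your proposal is correct and follows essentially the same route as the paper's own proof: condition on the independent mixing variable $Y_{\theta}^{\rho}$, use the Gaussian structure of $B^{\alpha/2}$ to reduce everything to the Laplace-transform identity $\int_{1}^{\infty}e^{-uy}\,l_{\rho}^{\theta}(y)\,dy=\Gamma(\rho,\theta+u)/\Gamma(\rho,\theta)$ evaluated at the quadratic form $u=\tfrac12\sum_{j,k}\xi_{j}\xi_{k}\gamma_{\alpha}(t_{j},t_{k})$, and conclude by uniqueness of characteristic functions; the paper verifies that identity by substituting $\omega=y-1$ and invoking the Tricomi confluent hypergeometric function via $\Psi(a,a;x)=e^{x}\Gamma(1-a,x)$, while you propose to obtain it from the inverse-Laplace-transform result (\ref{kl}) together with exponential tempering, which is an equivalent computation (this is precisely the content of the paper's remark identifying $l_{\rho}^{\theta}$ with the tempered, renormalized $f_{\rho}$). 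One inessential slip: no reciprocal change of variables $s=1/y$ is needed to reconcile with the Bernstein measure, since $\mu_{\rho,\theta}$ of (\ref{1dim}) is by construction the law of $Y_{\theta}^{\rho}$ itself --- the $1/y$ you have in mind occurs only as the argument of the Meijer G-function inside (\ref{kl}), not as a push-forward of the measure.
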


\begin{proof}
We have that
\begin{eqnarray*}
&&\mathbb{E}\exp \left\{ i\sqrt{Y_{\theta}^{\rho }} \sum_{k=1}^{n}\xi
_{k}B^{\alpha /2}(t_{k})\right\} \\
&=&\mathbb{E}\left[ \mathbb{E}\left( \left. \exp \left\{ i\sqrt{Y_{\theta
}^{\rho }}\sum_{k=1}^{n}\xi _{k}B^{\alpha /2}(t_{k})\right\} \right\vert
Y_{\theta}^{\rho }\right) \right] \\
&=&\mathbb{E}\left[ \exp \left\{ -\frac{1}{2}Y_{\theta}^{\rho
}\sum_{j,k=1}^{n}\xi _{j}\xi _{k}\gamma _{\alpha }(t_{j},t_{k})\right\} %
\right]
\end{eqnarray*}
which coincides with (\ref{ll3}). This can be proved by taking into account
that
\begin{eqnarray*}
\widetilde{l}_{\theta }^{\rho }(\eta ) &=&\frac{1}{\Gamma (\rho ,\theta
)\Gamma (1-\rho )}\int_{1 }^{+\infty }e^{-(\theta +\eta )y}(y-1)^{-\rho
}y^{-1}dy \\
&=&\frac{e^{-(\theta +\eta )}}{\Gamma (\rho ,\theta )\Gamma (1-\rho )}%
\int_{0}^{+\infty }e^{-(\theta +\eta ) \omega }\omega ^{-\rho }(1+\omega
)^{-1}d\omega \\
&=&[\text{by (1.6.25) in \cite{KIL}}] \\
&=&\frac{e^{-(\theta +\eta )}}{\Gamma (\rho ,\theta )}\Psi (1-\rho ,1-\rho
;\theta +\eta),
\end{eqnarray*}
where $\Psi (a,b;\cdot )$ is the confluent Tricomi hypergeometric function,
together with the well-known relationship $\Psi (a,a;x)=e^{x}\Gamma (1-a;x)$%
, for $a>0$.
\end{proof}

For any $n \in \mathbb{N}$, the joint probability density function of $%
B_{\alpha ,\rho }^{\theta }$ is therefore given by
\begin{equation}
f_{B_{\alpha ,\rho }^{\theta }}(\mathbf{x},\mathbf{\Sigma }_{\alpha })=\frac{
(2\pi )^{-n/2}}{\sqrt{\det \mathbf{\Sigma }_{\alpha }}}\int_{0}^{+\infty
}\tau ^{-n/2}\exp \left\{ -\frac{\mathbf{x}^{T}\mathbf{\Sigma }_{\alpha
}^{-1}\mathbf{x}}{2\tau }\right\} l^{\theta}_{\rho }(\tau )d\tau ,
\label{bb}
\end{equation}
where $\mathbf{\Sigma }_{\alpha }:=(\gamma _{\alpha
}(t_{j},t_{k}))_{j,k=1}^{n}$ and $\mathbf{x} \in \mathbb{R}^n$.

\begin{remark}
It is easy to check that, in the special case where $\rho =1$ and for any $%
\theta$, formula (\ref{ll3}) reduces to the characteristic function of the
fractional Brownian motion with $H=\alpha /2,$ and thus, by adding the
condition $\alpha =1$, we obtain the Brownian motion.
\end{remark}

\begin{remark}
We note that the density (\ref{ll}) coincides with $l^{\rho}_{\theta}(y)=%
\frac{\exp(-\theta y)}{f_{\rho}(y)}$, where $f_{\rho}(y)$ is given in (\ref%
{kl}), as can be easily checked by considering property P2 in Appendix A and (2.9.6)
in \cite{KIL2}.
\end{remark}

\begin{theorem}
Let $\alpha \in (0,2)$, $\rho \in (0,1)$ and $\theta >0$. The $k$-th order
moment of the tempered $\Gamma$-Grey Brownian Motion is given by
\begin{equation}
\mathbb{E}\left[ B_{\alpha ,\rho }^{\theta }(t)^{k}\right] =\left\{
\begin{array}{l}
0,\qquad k=2n+1 \\
\frac{2 t^{\alpha n}}{\Gamma (\rho , \theta )}{\LARGE G} _{1,2}^{2,0}\left[
\left. \theta \right\vert
\begin{array}{cc}
1-n &  \\
0, & \rho -n%
\end{array}
\right] ,\qquad k=2n%
\end{array}
\right.  \label{ms}
\end{equation}
for $k,n\in \mathbb{N}$, while its autocovariance reads
\begin{equation}
cov(B_{\alpha ,\rho }^{\theta }(t),B_{\alpha ,\rho }^{\theta }(s)) =\frac{
e^{-\theta}\theta ^{\rho -1}}{\Gamma (\rho , \theta )}\left[ t^{\alpha
}+s^{\alpha }-|t-s|^{\alpha }\right] .  \label{cov}
\end{equation}
\end{theorem}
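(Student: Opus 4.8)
The plan is to read off both formulas from the mixture representation of the previous theorem, namely $B_{\alpha ,\rho }^{\theta }(t)\overset{f.d.d.}{=}\sqrt{Y_{\theta }^{\rho }}\,B^{\alpha /2}(t)$, with $Y_{\theta }^{\rho }$ (density (\ref{ll})) independent of the fractional Brownian motion $B^{\alpha /2}$. Conditioning on $Y_{\theta }^{\rho }$ and using independence,
\[
\mathbb{E}\left[B_{\alpha ,\rho }^{\theta }(t)^{k}\right]=\mathbb{E}\left[(Y_{\theta }^{\rho })^{k/2}\right]\,\mathbb{E}\left[B^{\alpha /2}(t)^{k}\right].
\]
Since $B^{\alpha /2}(t)$ is centered Gaussian, its odd moments vanish, giving the first line of (\ref{ms}); for $k=2n$ the Gaussian factor $\mathbb{E}[B^{\alpha /2}(t)^{2n}]$ is an explicit constant (fixed by the variance $\gamma _{\alpha }(t,t)$) times $t^{\alpha n}$. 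Thus everything reduces to computing the moments $\mathbb{E}[(Y_{\theta }^{\rho })^{n}]$ and recognizing them as a Meijer $G$-function.

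The core step is the identity
\[
\Gamma (\rho ,\theta )\,\mathbb{E}\left[(Y_{\theta }^{\rho })^{n}\right]=\frac{1}{\Gamma (1-\rho )}\int_{1}^{\infty }y^{\,n-1}(y-1)^{-\rho }e^{-\theta y}\,dy={\LARGE G}_{1,2}^{2,0}\left[\left.\theta \right\vert \begin{array}{cc}1-n & \\ 0, & \rho -n\end{array}\right],
\]
which I would prove through Mellin transforms in $\theta $. Multiplying the middle expression by $\theta ^{u-1}$, integrating, and exchanging the order of integration (Fubini, all factors being positive), the inner $\theta $-integral produces $\Gamma (u)\,y^{-u}$, while the remaining $\int_{1}^{\infty }y^{\,n-1-u}(y-1)^{-\rho }\,dy$ is a Beta integral equal to $\Gamma (u+\rho -n)\Gamma (1-\rho )/\Gamma (u+1-n)$ on the strip $u>n-\rho $. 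Hence the Mellin transform equals $\Gamma (u)\Gamma (u+\rho -n)/\Gamma (u+1-n)$, which is exactly the Mellin--Barnes kernel (\ref{mei}) of the Meijer $G$-function on the right, with lower parameters $\{0,\rho -n\}$ and upper parameter $\{1-n\}$. By uniqueness of Mellin inversion the two functions coincide. Assembling this with the Gaussian even moment then gives the second line of (\ref{ms}).

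The step I expect to be the most delicate is precisely this special-function identification: one must justify the interchange of integrals, evaluate the Beta integral on its correct strip of convergence, and verify that the resulting ratio of Gamma functions reproduces the parameter list of $G_{1,2}^{2,0}$ in (\ref{mei}). As an alternative to the transform argument, one may close the Mellin--Barnes contour and sum the residues of the integrand at the poles of $\Gamma (-s)$ and $\Gamma (\rho -n-s)$, matching them termwise against the series of the confluent (Tricomi) hypergeometric function, since the substitution $y=1+w$ identifies the $Y$-moment integral with $e^{-\theta }\Psi (1-\rho ,n+1-\rho ;\theta )$.

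For the autocovariance I would again condition on $Y_{\theta }^{\rho }$: by independence and bilinearity of the covariance,
\[
\mathrm{cov}\!\left(B_{\alpha ,\rho }^{\theta }(t),B_{\alpha ,\rho }^{\theta }(s)\right)=\mathbb{E}\!\left[Y_{\theta }^{\rho }\right]\,\mathbb{E}\!\left[B^{\alpha /2}(t)\,B^{\alpha /2}(s)\right]=\mathbb{E}\!\left[Y_{\theta }^{\rho }\right]\gamma _{\alpha }(t,s).
\]
The first moment $\mathbb{E}[Y_{\theta }^{\rho }]$ is the $n=1$ instance of the integral above; the substitution $y=1+w$ reduces it to $\Gamma (1-\rho )\theta ^{\rho -1}$, so that $\mathbb{E}[Y_{\theta }^{\rho }]=e^{-\theta }\theta ^{\rho -1}/\Gamma (\rho ,\theta )$. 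Substituting $\gamma _{\alpha }(t,s)=t^{\alpha }+s^{\alpha }-|t-s|^{\alpha }$ then yields (\ref{cov}). The same covariance can alternatively be obtained by differentiating the characteristic function (\ref{ll3}) twice in the two relevant variables, exactly as in the proof of Corollary \ref{cor1}, with $\langle \xi _{1},\xi _{2}\rangle $ replaced by $\gamma _{\alpha }(t,s)$.
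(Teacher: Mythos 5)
You take essentially the same route as the paper: both arguments rest on the factorization (\ref{ggbm}), reduce the moments of $B_{\alpha,\rho}^{\theta}(t)$ to Gaussian moments times the moments of $Y_{\theta}^{\rho}$, identify $\mathbb{E}[(Y_{\theta}^{\rho})^{n}]$ with the Meijer $G$-function, and obtain the covariance as $\mathbb{E}[Y_{\theta}^{\rho}]\,\gamma_{\alpha}(t,s)$. The only real difference is how the special-function identities are justified: the paper cites (2.9.36) in \cite{KIL2} to equate $\frac{1}{\Gamma(1-\rho)}\int_{1}^{\infty}y^{n-1}(y-1)^{-\rho}e^{-\theta y}\,dy$ with the $G$-function, and reduces the covariance case to $e^{-\theta}\theta^{\rho-1}$ through an $H$-function identity ((1.125) in \cite{MAT}), whereas you prove the identification yourself via the Mellin transform in $\theta$ (your computation is correct: the inner integral gives $\Gamma(u)y^{-u}$, the Beta integral gives $\Gamma(1-\rho)\Gamma(u+\rho-n)/\Gamma(u+1-n)$ on $u>n-\rho$, and the resulting ratio is exactly the Mellin--Barnes kernel of $G_{1,2}^{2,0}$ in (\ref{mei})), and you get $\mathbb{E}[Y_{\theta}^{\rho}]$ by the elementary substitution $y=1+w$. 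Your version is self-contained where the paper's is a table lookup; otherwise the two proofs coincide.

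One caveat, which applies equally to the paper's own write-up but which your phrase ``gives the second line of (\ref{ms})'' glosses over: in this paper's normalization the fractional Brownian motion $B^{\alpha/2}$ has covariance $\gamma_{\alpha}(t,s)$, hence variance $\gamma_{\alpha}(t,t)=2t^{\alpha}$, so that $\mathbb{E}\bigl[B^{\alpha/2}(t)^{2n}\bigr]=\frac{(2n)!}{2^{n}n!}\,(2t^{\alpha})^{n}=\frac{(2n)!}{n!}\,t^{\alpha n}$. A careful assembly therefore yields $\mathbb{E}\bigl[B_{\alpha,\rho}^{\theta}(t)^{2n}\bigr]=\frac{(2n)!}{n!}\,t^{\alpha n}\,\mathbb{E}\bigl[(Y_{\theta}^{\rho})^{n}\bigr]$, i.e.\ the right-hand side of (\ref{ms}) with the constant $2$ replaced by $(2n)!/n!$; these agree only for $n=1$, which is precisely the case needed for the variance and for the covariance (\ref{cov}) (both of which check out against the characteristic function (\ref{ll3})). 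So you should make the Gaussian constant explicit in the assembly step: your method, carried out fully, actually establishes the formula with the constant $(2n)!/n!$, and the ``$2$'' in the stated (\ref{ms}) appears to be a typo for $n\geq 2$ rather than something your argument can reproduce.
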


\begin{proof}
We first evaluate the $k$-th order moment of the r.v. $Y_{\theta }^{\rho }$,
for $k\in \mathbb{N}$, as follows
\begin{eqnarray}
\mathbb{E}\left[ \left( Y_{\theta }^{\rho }\right) ^{k}\right] &=& \frac{1}{%
\Gamma (\rho , \theta ) \Gamma(1-\rho)}\int_{1 }^{+\infty }
y^{k-1}(y-1)^{-\rho}e^{-\theta y} dy  \label{yy} \\
&=&[\text{by (2.9.36) in \cite{KIL2}}]  \notag \\
&=&\frac{1}{\Gamma (\rho , \theta )}{\LARGE G}_{1,2}^{2,0} \left[ \left.
\theta \right\vert
\begin{array}{cc}
1-k &  \\
0, & \rho -k%
\end{array}
\right] .  \notag
\end{eqnarray}
By considering (\ref{ggbm}), together with the expression of the $k$-moment
of the fractional Brownian motion, formula (\ref{ms}) easily follows from
the independence between $B^{\alpha /2}$ and $Y_{\theta }^{\rho }.$ The
autocovariance can be obtained as follows
\begin{eqnarray*}
cov(B_{\alpha ,\rho }^{\theta }(t),B_{\alpha ,\rho }^{\theta }(s)) &=&
\mathbb{E}\left( Y_{\theta }^{\rho }\right) \mathbb{E}\left( B^{\alpha
/2}(t)\cdot B^{\alpha /2}(s)\right) \\
&=&\frac{1 }{\Gamma (\rho , \theta )}{\LARGE G}_{1,2}^{2,0} \left[ \left.
\theta \right\vert
\begin{array}{cc}
0 &  \\
0, & \rho -1%
\end{array}
\right] \left[ t^{\alpha }+s^{\alpha }-|t-s|^{\alpha }\right] \\
&=&\frac{1 }{\Gamma (\rho , \theta )}\frac{1}{2\pi i}\int_{ \mathcal{L}}%
\frac{\Gamma(h)\Gamma (\rho -1+h)\theta^{-h}}{ \Gamma (h)}dh \left[
t^{\alpha }+s^{\alpha }-|t-s|^{\alpha }\right] \\
&=&\frac{1 }{\Gamma (\rho , \theta )}{\LARGE H}_{0,1}^{1,0} \left[ \left.
\theta \right\vert
\begin{array}{c}
- \\
(\rho -1,1)%
\end{array}
\right] \left[ t^{\alpha }+s^{\alpha }-|t-s|^{\alpha }\right] ,
\end{eqnarray*}
which coincides with (\ref{cov}), by taking into account (1.125) in \cite%
{MAT}.
\end{proof}

\begin{remark}
For $\rho =1$ and any $\theta$, formula (\ref{cov}) reduces to the
covariance of the fractional Brownian motion.
\end{remark}

Finally, from (\ref{ll3}) it is clear that the process $B_{\alpha ,\rho
}^{\theta }$ has stationary increments with characteristic function
\begin{equation}
\mathbb{E}\exp \left\{ i\xi \lbrack B_{\alpha ,\rho }^{\theta }(t)-B_{\alpha
,\rho }^{\theta }(s)]\right\} =\frac{\Gamma \left( \rho , \theta + \frac{\xi
^{2}}{2}|t-s|^{\alpha } \right) }{\Gamma (\rho , \theta )},\quad \xi \in
\mathbb{R},\text{ }t,s\geq 0.  \label{ggb2}
\end{equation}

\section{Time-change representation of the Gamma-grey Brownian motion}

In this section we present a characterization of $B^{\theta}_{\alpha,\rho}$
as a time-changed Brownian motion, that holds in the sense of the
one-dimensional distribution and in the special case where $\theta=0$.

\subsection{The random-time process}

We start by introducing the following process that will represent the
random-time argument.

\begin{definition}
Let $Y_{\rho }(t),$ $t\geq 0$, be the stochastic process defined by means of
the following Laplace transform of its $n$-times density
\begin{equation}
\mathbb{E}e^{-\sum_{k=1}^{n}\eta _{k}Y_{\rho }(t_{k})}=\frac{ \Gamma \left(
\rho , \sum_{k=1}^{n}\eta _{k}t_{k}\right) }{\Gamma \left( \rho \right) }%
,\qquad \eta _{1},...\eta _{n}>0,\text{ }\rho \in (0,1).  \label{chi}
\end{equation}
\end{definition}

The previous definition is well-posed, since the function (\ref{chi}) can be
checked to be completely monotone (w.r.t. $\eta _{1},...\eta _{n}$ and for
any choice of $t_{1},...t_{n}\geq 0$), by adapting the result of Lemma 3.1
in \cite{BEG} \ to the case $\alpha =1.$ The process is, by definition,
self-similar with scaling parameter equal to one, since, by (\ref{chi}), we
get that $\left\{ aY^{\rho}(t),t\geq 0\right\} \overset{f.d.d.}{=} \left\{
Y_{\rho }(at),t\geq 0\right\} ,$ for any $a>0.$ Moreover, it has stationary
increments, as can be seen by taking into account that (\ref{chi}) is
well-defined even for $\eta _{j}<0,$ for any $j$ (by analytic continuation),
so that we have that
\begin{equation}
\mathbb{E}e^{-\eta \lbrack Y_{\rho }(t_{2})-Y_{\rho }(t_{1})]}=\frac{\Gamma
\left( \rho , \eta (t_{2}-t_{1})\right) }{ \Gamma \left( \rho \right) }%
,\qquad t_{2}>t_{1}\geq 0.  \label{chic}
\end{equation}
We denote by $l_{\rho }(y,t)$
the transition density of $Y_{\rho }(t),$ (for $y,t\geq 0,$ $\rho \in (0,1)$); therefore, as a
consequence of the self-similarity, we have that $l_{\rho
}(y,t)=t^{-1}l_{\rho }(yt^{-1})$ (where $l_{\rho }(\cdot )$ is given in (\ref%
{ll}), with $\theta =0$) and
\begin{equation}
l_{\rho }(y,t)=\frac{1}{\Gamma (\rho)\Gamma (1-\rho )}\frac{1_{y>t }}{y(y-1
)^{\rho }},\qquad \rho \in (0,1).  \label{cc}
\end{equation}
Its space-Laplace transform coincides with (\ref{chi}), for $n=1,$ i.e. $
\widetilde{l}_{\rho }(\eta ,t)=\Gamma (\rho ,\eta t)/\Gamma (\rho ).$

Formula (\ref{chic}) proves also that $Y_{\rho }$ has increasing
trajectories, since, by (\ref{cc}), we have that $Y_{\rho }(t_{2})-Y_{\rho
}(t_{1})\geq t_{2}-t_{1}$ almost surely. 

We recall that, in the ggBm case, the random-time argument is represented by
the inverse of the $\beta$-stable subordinator. Therefore, e are interested
in checking if, also in the $\Gamma$-GBM case, it is possible to define the
random-time argument as the inverse of another stochastic process and to
characterize the latter.

By resorting to the Doob's theorem, we can refer to the separable version of
$Y_{\rho }$ so that its hitting time is well-defined as follows
\begin{equation}
T_{\rho }(x):=\inf \{t\geq 0:Y_{\rho }(t)>x\},\qquad x\geq 0.  \label{ggb3}
\end{equation}
We now derive its transition density.

\begin{theorem}
The space-Laplace transform of the density of the process $T_{\rho }$
defined in (\ref{ggb3}) is given by
\begin{equation}
\mathbb{E}e^{-\xi T_{\rho }(x)}=E_{1,1}^{\rho }\left( - \xi x \right)
,\qquad \xi >0,\text{ }\rho \in (0,1),\text{ }x\geq 0, \text{ }  \label{ecc}
\end{equation}
and its transition density $h_{\rho }(t,x):=P\{T_{\rho }(x)\in dt\}/dt$, $%
t,x\geq 0,$ reads
\begin{equation}
h_{\rho }(t,x)=\frac{t^{\rho -1}(x- t)^{-\rho }1_{t<x }}{\Gamma (\rho
)\Gamma (1-\rho )}.  \label{ecc2}
\end{equation}
\end{theorem}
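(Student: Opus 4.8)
The plan is to exploit the structural fact, recorded just before the statement, that the separable version of $Y_\rho$ has strictly increasing trajectories with $Y_\rho(t_2)-Y_\rho(t_1)\geq t_2-t_1$ almost surely, so that $T_\rho$ is nothing but the (generalized) inverse of $Y_\rho$. Monotonicity then yields the duality
\begin{equation*}
\{T_\rho(x)\leq t\}=\{Y_\rho(t)\geq x\},
\end{equation*}
valid up to a null boundary set that is irrelevant because $Y_\rho(t)$ has an absolutely continuous law. Consequently $P(T_\rho(x)\leq t)=P(Y_\rho(t)\geq x)=1-P(Y_\rho(t)\leq x)$, which reduces the whole computation to the one-dimensional law of $Y_\rho(t)$ already obtained in (\ref{cc}). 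Since $Y_\rho(t)>t$ almost surely, $P(Y_\rho(t)\leq x)=0$ for $t\geq x$, which confirms $T_\rho(x)\leq x$ a.s. and explains why the resulting density is supported on $\{t<x\}$.

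First I would derive the transition density (\ref{ecc2}). Using self-similarity in the form $l_\rho(y,t)=t^{-1}l_\rho(y/t)$ together with the change of variable $u=y/t$, the survival function becomes
\begin{equation*}
P(Y_\rho(t)\leq x)=\int_t^x l_\rho(y,t)\,dy=\int_1^{x/t}l_\rho(u)\,du,\qquad t<x,
\end{equation*}
which is advantageous because it concentrates the entire $t$-dependence in the single upper limit $x/t$ and keeps the $(y-t)^{-\rho}$ endpoint singularity out of the differentiation. Differentiating and using $h_\rho(t,x)=-\tfrac{d}{dt}P(Y_\rho(t)\leq x)$ gives $h_\rho(t,x)=\tfrac{x}{t^2}\,l_\rho(x/t)$; inserting the explicit $l_\rho$ from (\ref{ll}) with $\theta=0$ and simplifying produces exactly $t^{\rho-1}(x-t)^{-\rho}1_{t<x}/(\Gamma(\rho)\Gamma(1-\rho))$. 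A Beta-integral check that this integrates to $1$ over $(0,x)$ confirms it is a genuine density.

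For the Laplace transform (\ref{ecc}) I would integrate the density directly, expanding the exponential and integrating term by term:
\begin{equation*}
\mathbb{E}e^{-\xi T_\rho(x)}=\frac{1}{\Gamma(\rho)\Gamma(1-\rho)}\sum_{k\geq0}\frac{(-\xi)^k}{k!}\int_0^x t^{k+\rho-1}(x-t)^{-\rho}\,dt,
\end{equation*}
where each inner integral is the Beta integral $x^k\,\Gamma(k+\rho)\Gamma(1-\rho)/\Gamma(k+1)$. After cancelling $\Gamma(1-\rho)$ and writing $\Gamma(k+\rho)/\Gamma(\rho)=(\rho)_k$, the series collapses to $\sum_{k\geq0}(\rho)_k(-\xi x)^k/(k!)^2$, which is precisely the three-parameter Mittag-Leffler (Prabhakar) function $E_{1,1}^\rho(-\xi x)$, equivalently ${}_1F_1(\rho;1;-\xi x)$; absolute convergence justifies exchanging sum and integral.

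The main obstacle is the conceptual first step, namely rigorously justifying the inverse/duality relation between $T_\rho$ and $Y_\rho$. Unlike the ggBm case, where the random time is the inverse of a stable subordinator, here $Y_\rho$ is \emph{not} a Lévy process: its $n$-times Laplace transform (\ref{chi}) is not of product form, so the usual inverse-subordinator density formulas are unavailable and the argument must proceed directly from monotonicity and the separable modification supplied by Doob's theorem. Once that duality is secured, the remaining work is routine manipulation of the Beta integral and of the Prabhakar series.
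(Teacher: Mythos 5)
Your proof is correct, but it runs in the opposite direction to the paper's and with more elementary tools. Both arguments start from the same hitting-time duality ($P\{Y_{\rho}(t)>x\}=P\{T_{\rho}(x)<t\}$, which the paper, like you, justifies only by the a.s. monotonicity of the separable version of $Y_{\rho}$ — so the ``main obstacle'' you flag is treated no more rigorously there than in your sketch). From that point the paper takes a double Laplace transform of the duality, first in $x$ and then in $t$, arrives at $\widetilde{\widetilde{h}}_{\rho}(\xi,\eta)=\eta^{\rho-1}/(\xi+\eta)^{\rho}$ in (\ref{ps}), inverts in $\eta$ to obtain (\ref{ecc}) first, and only then inverts in $\xi$, using Meijer G- and H-function identities (properties P1--P2 and table formulae from \cite{MAT}, \cite{KIL2}, \cite{MAI}) to reach the density (\ref{ecc2}). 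You instead differentiate the duality directly: self-similarity gives $P(Y_{\rho}(t)\leq x)=\int_{1}^{x/t}l_{\rho}(u)\,du$, hence $h_{\rho}(t,x)=\frac{x}{t^{2}}\,l_{\rho}(x/t)$ — which is exactly the relation (\ref{rel}) that the paper only records after the theorem — and (\ref{ecc2}) follows by substituting (\ref{ll}) with $\theta=0$; you then recover (\ref{ecc}) by a term-by-term Beta-integral computation, $\int_{0}^{x}t^{k+\rho-1}(x-t)^{-\rho}\,dt=x^{k}\Gamma(k+\rho)\Gamma(1-\rho)/\Gamma(k+1)$, identifying the resulting series $\sum_{k\geq 0}(\rho)_{k}(-\xi x)^{k}/(k!)^{2}$ with $E_{1,1}^{\rho}(-\xi x)$; both steps check out, and Tonelli justifies the interchange of sum and integral. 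What each approach buys: yours is self-contained, avoids Laplace-inversion uniqueness arguments and the H-function machinery entirely, and obtains the density by a one-line differentiation; the paper's transform route is computationally heavier and leans on special-function tables, but it produces the double-transform formula (\ref{ps}) as a byproduct, which exhibits the structural analogy with inverse stable subordinators and does not appear anywhere in your argument.
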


\begin{proof}
By considering (\ref{ggb3}) we can write that $P\{Y_{\rho
}(t)>x\}=P\{T_{\rho }(x)<t\},$ so that, taking the Laplace transform w.r.t. $%
x$ and denoting by $\gamma \left( \rho ,x\right) =\int_{0}^{x}e^{-w}w^{\rho
-1}dw$ the lower incomplete gamma function, we have that
\begin{eqnarray*}
\int_{0}^{t}\widetilde{h}_{\rho }(z,\eta )dz &=&\int_{0}^{+\infty }e^{-\eta
x}\int_{x}^{+\infty }l_{\rho }(z,t)dzdx=\frac{1}{\eta }\int_{0}^{+\infty
}(1-e^{-\eta z})l_{\rho }(z,t)dz \\
&=&\frac{\Gamma \left( \rho \right) -\Gamma \left( \rho , \eta t\right) }{%
\eta \Gamma \left( \rho \right) }=\frac{\gamma \left( \rho , \eta t\right) }{%
\eta \Gamma \left( \rho \right) } \\
&=&\frac{1}{\eta \Gamma \left( \rho \right) }\int_{0}^{ \eta t}e^{-w}w^{\rho
-1}dw.
\end{eqnarray*}
By taking also the Laplace transform w.r.t. $t$, we get
\begin{equation}
\widetilde{\widetilde{h}}_{\rho }(\xi ,\eta )=\frac{\eta ^{\rho -1}}{\left(
\xi+\eta \right) ^{\rho }},  \label{ps}
\end{equation}
whose inverse transform (w.r.t. $\eta $) coincides with (\ref{ecc}). It is
easy to check that the the inverse Laplace transform (w.r.t. $\xi )$ of the
latter reads
\begin{equation*}
h_{\rho }(t,x)=\frac{1_{t<x }}{t\Gamma (\rho )}{\LARGE G}_{1,1}^{1,0} \left[
\left. \frac{ t}{x}\right\vert
\begin{array}{c}
1 \\
\rho%
\end{array}
\right] .
\end{equation*}
Indeed, by taking into account (\ref{mei}) together with formula (2.19) in
\cite{MAT} (since $\rho >0)$ we get
\begin{eqnarray*}
\int_{0}^{+\infty }e^{-\xi t}h_{\rho }(t,x)dt &=&\frac{1}{\Gamma (\rho )}
\int_{0}^{+\infty }\frac{e^{-\xi t}}{t}{\LARGE H}_{1,1}^{1,0}\left[ \left.
\frac{ t}{x}\right\vert
\begin{array}{c}
(1,1) \\
(\rho ,1)%
\end{array}
\right] dt \\
&=&\frac{1}{\Gamma (\rho )}{\LARGE H}_{2,1}^{1,1}\left[ \left. \frac{1 }{%
x\xi }\right\vert
\begin{array}{c}
(1,1)(1,1) \\
(\rho ,1)%
\end{array}
\right] \\
&=&[\text{by property P1 in Appendix A}] \\
&=&\frac{1}{\Gamma (\rho )}{\LARGE H}_{1,2}^{1,1}\left[ \left. x\xi
\right\vert
\begin{array}{c}
(1-\rho ,1) \\
(0,1)(0,1)%
\end{array}
\right] ,
\end{eqnarray*}
which coincides with (\ref{ecc}) by (1.137) in \cite{MAT}. Moreover, by
resorting to formulae (2.4)-(2.5) in \cite{MAI} and by property P2 in Appendix A, we
can simplify the previous expression into (\ref{ecc2}).
\end{proof}

\

It is immediate to see from (\ref{ecc}), that, as happens for $Y_{\rho },$ also $%
T_{\rho }$ is self-similar, with scaling parameter equal to one, since $%
\left\{ aT_{\rho }(t),t\geq 0\right\} \overset{f.d.d.}{=}\left\{ T_{\rho
}(at),t\geq 0\right\} ,$ for any $a>0.$ Moreover, the following relationship
holds between the densities $h_{\rho }(t,x)$ and $l_{\rho }(x,t),$ of $%
T_{\rho }$ and $Y_{\rho }$, respectively:
\begin{equation}
h_{\rho }(t,x)=\frac{x}{t}l_{\rho }(x,t),\qquad x,t\geq 0.  \label{rel}
\end{equation}

It is easy to derive the partial differential equations (p.d.e.'s) satisfied
by the densities of $T_{\rho },$ given in (\ref{ecc2}), and of its inverse $%
Y_{\rho },$ given in (\ref{cc}); for this reason, we omit the proof of the
following result.

\begin{coro}
The density of the process $T_{\rho }$ satisfies the following p.d.e.
\begin{equation}
\frac{\partial }{\partial t}h_{\rho }(t,y)=-\frac{\partial }{ \partial y}%
h_{\rho }(t,y)+\frac{\rho -1}{t}h_{\rho }(t,y),\quad t,y\geq 0,  \label{hh}
\end{equation}
with initial condition $h_{\rho }(t,0)=\delta (t),$ while the density of $
Y_{\rho }$ satisfies the following p.d.e.
\begin{equation}
\frac{\partial }{\partial t}l_{\rho }(y,t)=-\frac{\partial }{ \partial y}%
l_{\rho }(y,t)+\left[ \frac{\rho }{t}-\frac{1 }{y}\right] l_{\rho
}(y,t),\quad t,y\geq 0,  \label{hh2}
\end{equation}
with initial condition $l_{\rho }(y,0)=0.$
\end{coro}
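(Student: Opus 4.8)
The plan is to derive each partial differential equation directly from the closed-form densities, verifying that the stated PDE is satisfied by the explicit expression rather than deriving the densities from the PDEs. Since both densities are elementary (products of power functions and indicators), this amounts to a direct computation of the relevant partial derivatives and a check that the combination matches the claimed right-hand side.

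For the density $h_{\rho}(t,y)$ in (\ref{ecc2}), I would write $h_{\rho}(t,y)=\frac{1}{\Gamma(\rho)\Gamma(1-\rho)}\,t^{\rho-1}(y-t)^{-\rho}1_{t<y}$ and compute $\partial_t h_{\rho}$ and $\partial_y h_{\rho}$ in the open region $\{t<y\}$ where the indicator is constant. The $t$-derivative produces two terms, one from differentiating $t^{\rho-1}$ (giving the factor $(\rho-1)/t$) and one from differentiating $(y-t)^{-\rho}$ (giving a factor $\rho(y-t)^{-1}$); the $y$-derivative gives $-\rho(y-t)^{-1}h_{\rho}$. Adding $\partial_y h_{\rho}$ to $\partial_t h_{\rho}$ cancels the $(y-t)^{-1}$ contributions, leaving exactly $\frac{\rho-1}{t}h_{\rho}$, which is (\ref{hh}). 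The initial condition $h_{\rho}(t,0)=\delta(t)$ follows from the hitting-time interpretation: $T_{\rho}(0)=\inf\{t\geq 0:Y_{\rho}(t)>0\}=0$ almost surely, since $Y_{\rho}$ has strictly increasing trajectories starting from the origin, so the density concentrates at $t=0$.

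For the density $l_{\rho}(y,t)$ in (\ref{cc}), the computation is analogous. I would differentiate $l_{\rho}(y,t)=\frac{1}{\Gamma(\rho)\Gamma(1-\rho)}\,\frac{1_{y>t}}{y(y-t)^{\rho}}$ in the region $\{y>t\}$: the $y$-derivative contributes a $-1/y$ term (from the $y^{-1}$ factor) and a $-\rho(y-t)^{-1}$ term, while the $t$-derivative contributes $+\rho(y-t)^{-1}$. Combining shows $\partial_t l_{\rho}+\partial_y l_{\rho}$ cancels the $(y-t)^{-1}$ terms and leaves a $-\frac{1}{y}l_{\rho}$ contribution; to match (\ref{hh2}) with its $\big[\frac{\rho}{t}-\frac{1}{y}\big]$ coefficient I would use the self-similarity $l_{\rho}(y,t)=t^{-1}l_{\rho}(yt^{-1})$ noted before the statement, which after the change of variables reorganizes the $t$-dependence into the stated $\rho/t$ term. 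Alternatively, one can simply pass through relation (\ref{rel}), $h_{\rho}(t,y)=\tfrac{y}{t}\,l_{\rho}(y,t)$, and transport (\ref{hh}) into (\ref{hh2}) by the product rule, which I expect to be the cleanest route.

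The main obstacle, and the reason the authors omit the proof, is purely bookkeeping: the combinations of power-law exponents must be tracked carefully so that the $(y-t)^{-1}$ singular terms cancel and the surviving coefficients assemble into precisely $\frac{\rho-1}{t}$ and $\big[\frac{\rho}{t}-\frac{1}{y}\big]$. There is no conceptual difficulty — the support indicators contribute no distributional boundary terms in the interior, and the initial conditions are dictated by the probabilistic meaning of the processes ($T_{\rho}$ hits level $0$ instantly, and $Y_{\rho}(0)$ has no mass on $(0,\infty)$). Because every step is a one-line differentiation of an explicit elementary function, the verification is routine, which justifies omitting it.
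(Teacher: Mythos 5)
The paper itself omits the proof (the authors state the derivation is easy), so there is no official argument to compare against; your strategy of verifying the p.d.e.'s directly from the closed-form densities is exactly the intended one. Your treatment of (\ref{hh}) is complete and correct: on $\{t<y\}$, writing $h_{\rho}(t,y)=Ct^{\rho-1}(y-t)^{-\rho}$ with $C=1/(\Gamma(\rho)\Gamma(1-\rho))$, one gets $\partial_{t}h_{\rho}=h_{\rho}\bigl[\tfrac{\rho-1}{t}+\tfrac{\rho}{y-t}\bigr]$ and $\partial_{y}h_{\rho}=-h_{\rho}\tfrac{\rho}{y-t}$, which is (\ref{hh}). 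Your remark that the indicators create no distributional boundary terms is also true, and for a reason worth making explicit: the operator $\partial_{t}+\partial_{y}$ differentiates along the direction tangent to the support boundary $\{t=y\}$, so both the indicator and the singular factor $(y-t)^{-\rho}$ are annihilated by it even in the distributional sense.

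The wrinkle is in your treatment of (\ref{hh2}), and it stems from a misprint in the paper rather than from your method. Formula (\ref{cc}) as printed, $l_{\rho}(y,t)=C\,1_{y>t}/(y(y-1)^{\rho})$, is inconsistent with the self-similarity relation $l_{\rho}(y,t)=t^{-1}l_{\rho}(yt^{-1})$ stated just before it and with (\ref{rel}); carrying out the rescaling of (\ref{ll}) (with $\theta=0$) actually gives
\begin{equation*}
l_{\rho}(y,t)=\frac{1}{\Gamma(\rho)\Gamma(1-\rho)}\,\frac{t^{\rho}}{y\,(y-t)^{\rho}}\,1_{y>t}.
\end{equation*}
Your primary computation uses the density essentially as printed (without the factor $t^{\rho}$), which is precisely why you obtain only the $-\tfrac{1}{y}l_{\rho}$ term and the $\tfrac{\rho}{t}$ term goes missing. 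With the corrected density the direct verification closes at once, since differentiating $t^{\rho}$ in $t$ contributes exactly $\tfrac{\rho}{t}l_{\rho}$: one has $\partial_{t}l_{\rho}=l_{\rho}\bigl[\tfrac{\rho}{t}+\tfrac{\rho}{y-t}\bigr]$ and $\partial_{y}l_{\rho}=l_{\rho}\bigl[-\tfrac{1}{y}-\tfrac{\rho}{y-t}\bigr]$, giving (\ref{hh2}). So your appeal to self-similarity to ``reorganize the $t$-dependence'' points at the right fix, but as written it is a gesture, not a computation; you should either state the corrected form of (\ref{cc}) and differentiate it, or use your fallback route, which is indeed the cleanest: substituting $l_{\rho}(y,t)=\tfrac{t}{y}h_{\rho}(t,y)$ from (\ref{rel}) and applying the product rule transports (\ref{hh}) into (\ref{hh2}) in one line. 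The initial conditions are handled adequately by the probabilistic interpretation (equivalently, $h_{\rho}(\cdot,x)$ is a probability density supported on $[0,x]$, hence converges to $\delta$ as $x\downarrow 0$, while $l_{\rho}(y,t)\to 0$ pointwise for $y>0$ as $t\downarrow 0$). With that one repair, the proposal is sound.
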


\begin{remark}
For $\rho =1$, equation (\ref{hh}) reduces to the partial differential
equation satisfied by the density of the elementary subordinator $
T_{1}(y)=y ,$ which is equal to $h_{1}(t,y)=\delta (t-y),$ as can be easily
checked by taking the Laplace transform w.r.t. $y.$ Analogously, we have
that $l_{1}(y,t)=t\delta (t-y)/y$, which satisfies equation (\ref{hh2}),
with $\rho =1$. Another interesting special case is for $\rho =1/2$. In this
case the densities of the processes $T_{1/2} $ and $Y_{1/2}$ are
respectively equal to
\begin{equation}
h_{1/2}(t,y)=\frac{1}{\pi \sqrt{t(y- t)}}1_{t<y },  \label{ar}
\end{equation}
which coincides with the arcsine law, and
\begin{equation}
l_{1/2}(y,t)=\frac{\sqrt{ t}}{\pi y\sqrt{y-t}}1_{y>t}.  \label{ar2}
\end{equation}
\end{remark}

\begin{remark}
We report in the following table, for the reader's convenience, the Laplace
pairs (w.r.t. time and space) of the densities $h_{\rho }(t,y)$ and $l_{\rho
}(y,t).$

\
\begin{equation*}
\begin{tabular}{|l|l|l|}
\hline
Process $T_{\rho }$ with density $h_{\rho }(t,y)$ & $\widetilde{h} _{\rho
}(\xi ,y)=E_{1,1}^{\rho }(-\xi y)$ & $\widetilde{h}_{\rho }(t,\eta )=\frac{1%
}{\Gamma (\rho )}(\eta t)^{\rho -1}e^{-\eta t}$ \\ \hline
Process $Y_{\rho }$ with density $l_{\rho }(y,t)$ & $\widetilde{l} _{\rho
}(\eta ,t)=\frac{\Gamma (\rho ,\eta t)}{\Gamma (\rho )}$ & $\widetilde{l}%
_{\rho }(y,\xi )=\rho E_{1,2}^{1+\rho }(-y\xi )$ \\ \hline
\end{tabular}%
\end{equation*}

\

The time-Laplace transform of $l_{\rho }(y,t)$\ can be obtained taking into
account (1.6.15) and (1.9.3) in \cite{KIL}. It is evident from the previous
corollary that, despite the expression of the Laplace transform of $h_{\rho
}(t,y)$ and $l_{\rho }(y,t)$ (w.r.t. space and time, respectively) is given
in terms of Mittag-Leffler functions, the p.d.e. governing the densities of
both $Y_{\rho }$ and $T_{\rho }$ do not involve fractional operators.
\end{remark}

\subsection{Time-changed representation and governing equation}

\label{OneDimRep}

We start by considering the time-change of a standard Brownian motion $%
B:=\{B(t),t>0\}$ by the time-stretched process $Y_{\rho }(t^{\alpha }),$
under the assumption that the latter is independent of $B$, i.e.
\begin{equation}  \label{gamma}
B_{\alpha,\rho}(t):=B(Y_{\rho }(t^{\alpha })),
\end{equation}
for $\rho \in (0,1]$ and $\alpha \in (0,1]$.\newline
As a consequence of (\ref{chi}), we can write its (one-dimensional)
characteristic function as
\begin{equation}
\Phi _{\alpha ,\rho }(\xi ,t):=\mathbb{E}e^{i\xi B_{\alpha, \rho}(t)}=\frac{
\Gamma \left( \rho , \xi ^{2}t^{\alpha }/2\right) }{\Gamma (\rho )},
\label{ft}
\end{equation}
from which it is immediate to check that the following equality of the
one-dimensional distribution (hereafter denoted by $\overset{d}{=}$) holds
\begin{equation}
B_{\alpha ,\rho }(t)\overset{d}{=}\sqrt{Y_{\rho }}B^{\alpha /2}(t),\qquad
t\geq 0,  \label{mol}
\end{equation}
where $B^{\alpha /2}:=\{B^{\alpha /2}(t),t\geq 0\}$ is a fractional Brownian
motion and $Y_{\rho }$ is a r.v., independent of $B^{\alpha /2}$, with
density $l_{\rho }(y)$ given in (\ref{ll}), with $\theta =0$. We note that
the moments of any order of $B_{\alpha ,\rho }$ are infinite, as can be
easily checked by considering (\ref{yy}). \newline

\begin{remark}
It is well-known that, in the case of the ggBm $B^{\beta ,\alpha
}:=\{B^{\beta ,\alpha }(t),t\geq 0\}$, the following equality of the
one-dimensional distribution holds $B^{\beta ,\alpha }(t)\overset{d}{=}
B(X^{\beta }(t^{\alpha /\beta })),$ $t\geq 0,$ where the random time
argument $X^{\beta }:=\left\{ X^{\beta }(t),t\geq 0\right\} ,$ is the
inverse of a stable subordinator of index $\beta \in (0,1)$ (see \cite{DAS}
and \cite{MUR2}). As remarked in \cite{DAS} for the ggBm, also in this case
the representation (\ref{mol}) holds only for the one-dimensional
distribution. Indeed, for example, the two-times characteristic function of $%
B(Y_{\rho }(t^{\alpha }))$ reads:
\begin{equation*}
\mathbb{E}e^{i\xi _{1}B(Y_{\rho }(t_{1}^{\alpha }))+i\xi _{2}B(Y_{\rho
}(t_{2}^{\alpha }))}=\frac{\Gamma \left( \rho , (\xi _{1}^{2}+\xi _{1}\xi
_{2})t_{1}^{\alpha }+ (\xi _{2}^{2}+\xi _{1}\xi _{2})t_{2}^{\alpha }\right)
}{\Gamma \left( \rho \right) },
\end{equation*}%
and therefore it does not depend on $|t_{1}-t_{2}|^{\alpha }$, on the
contrary of what happens for $\mathbb{E}e^{i\xi _{1}B_{\alpha
,\rho}(t_{1})+i\xi _{2}B_{\alpha ,\rho }(t_{2})}$ (as can be easily seen
from formula (\ref{ll3}), for $n=2$ and $\theta =0$).
\end{remark}

Note that, in our case, the stretching effect of time is obtained by the
power of $\alpha$, and does not depend on $\rho$. This affects also the
following governing equation. We prove now that the characteristic function
of $B_{\alpha ,\rho }$ satisfies a time-stretched integral equation, in
analogy with the ggBm.

\begin{theorem}
Let $\rho \in (0,1]$ and $\alpha \in (0,1]$. Let $e_{\rho }^{z}:=z^{\rho
-1}E_{\rho ,\rho }(z^{\rho })$ be the so-called $\rho $-exponential function
(see \cite{KIL}, p.50), then the characteristic function (\ref{ft})
satisfies the following integral equation
\begin{equation}
\Phi _{\alpha ,\rho }(\xi ,t)=1-\frac{\alpha \xi ^{2}}{2} \int_{0}^{t}e^{-%
\frac{ \xi ^{2}}{2}(t^{\alpha }-s^{\alpha })}e_{\rho }^{\frac{ \xi ^{2}}{2}%
(t^{\alpha }-s^{\alpha })}s^{\alpha -1}\Phi _{\alpha ,\rho }(\xi
,s)ds,\qquad t\geq 0,\xi \in \mathbb{R}\text{.}  \label{ll4}
\end{equation}
\end{theorem}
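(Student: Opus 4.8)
The plan is to strip away the time-stretching by the substitution $\tau=t^{\alpha}$, which turns the $t$-dependent Volterra kernel of (\ref{ll4}) into a genuine convolution kernel, and then to verify the resulting equation by comparing Laplace transforms in $\tau$. First I would fix $\xi$, abbreviate $c:=\xi^{2}/2$, and set $g(\tau):=\Gamma(\rho,c\tau)/\Gamma(\rho)$, so that $\Phi_{\alpha,\rho}(\xi,t)=g(t^{\alpha})$ by (\ref{ft}). Changing variables $\sigma=s^{\alpha}$, $\tau=t^{\alpha}$ in the integral of (\ref{ll4}) and using $d\sigma=\alpha s^{\alpha-1}ds$, the prefactor $\tfrac{\alpha\xi^{2}}{2}s^{\alpha-1}ds$ becomes $c\,d\sigma$ and the arguments $\tfrac{\xi^{2}}{2}(t^{\alpha}-s^{\alpha})$ become $c(\tau-\sigma)$; hence (\ref{ll4}) is equivalent to the convolution equation
\[
g(\tau)=1-c\int_{0}^{\tau}e^{-c(\tau-\sigma)}e_{\rho}^{\,c(\tau-\sigma)}\,g(\sigma)\,d\sigma=1-c\,(K*g)(\tau),\qquad K(\tau):=e^{-c\tau}e_{\rho}^{\,c\tau}.
\]

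Next I would compute the two Laplace transforms (conjugate variable $p$, with $\Re p>0$). For the left-hand side, differentiating gives $g'(\tau)=-\tfrac{c^{\rho}}{\Gamma(\rho)}\tau^{\rho-1}e^{-c\tau}$ and $g(0)=1$, so from $\mathcal{L}\{g'\}(p)=p\,\mathcal{L}\{g\}(p)-1=-c^{\rho}/(p+c)^{\rho}$ one gets $\mathcal{L}\{g\}(p)=\tfrac{1}{p}-\tfrac{c^{\rho}}{p(p+c)^{\rho}}$. For the kernel, I would insert the series $e_{\rho}^{\,c\tau}=\sum_{k\geq0}(c\tau)^{\rho(k+1)-1}/\Gamma(\rho(k+1))$, use $\mathcal{L}\{\tau^{\rho(k+1)-1}e^{-c\tau}\}(p)=\Gamma(\rho(k+1))/(p+c)^{\rho(k+1)}$, and sum the resulting geometric series in $r:=\big(c/(p+c)\big)^{\rho}$ (which satisfies $|r|<1$ for $\Re p>0$), obtaining $c\,\mathcal{L}\{K\}(p)=r/(1-r)$ and therefore $1+c\,\mathcal{L}\{K\}(p)=(1-r)^{-1}$.

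Finally, applying the convolution theorem, the transform of the right-hand side of the reduced equation is $\tfrac{1}{p}-c\,\mathcal{L}\{K\}(p)\,\mathcal{L}\{g\}(p)$, so the equation holds if and only if $\mathcal{L}\{g\}(p)\,\big(1+c\,\mathcal{L}\{K\}(p)\big)=1/p$, i.e. $\mathcal{L}\{g\}(p)=(1-r)/p=\tfrac{1}{p}-\tfrac{c^{\rho}}{p(p+c)^{\rho}}$, which is exactly the value computed in the previous step. Since both sides of the $\tau$-equation are continuous and bounded, uniqueness of the Laplace transform forces them to coincide, and undoing the substitution $\tau=t^{\alpha}$ returns (\ref{ll4}). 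I expect the only genuinely technical point to be the kernel transform: justifying the term-by-term Laplace inversion of the Mittag--Leffler series and the convergence of the geometric series for $\Re p>0$, after which the identity collapses purely algebraically.
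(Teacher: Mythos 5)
Your proof is correct, and it takes a genuinely different route from the paper's. The paper works entirely in the time domain: it substitutes $s=tw^{1/\alpha}$, expands both the Mittag--Leffler kernel and $\Gamma(\rho,\cdot)$ in power series (using the series representations (\ref{gg})--(\ref{gg2})), integrates term by term via Beta integrals, and resums the resulting double series until it telescopes back to $\Phi_{\alpha,\rho}(\xi,t)$. You instead observe that the substitution $\tau=t^{\alpha}$, $\sigma=s^{\alpha}$ turns (\ref{ll4}) into a genuine convolution equation $g=1-c\,(K*g)$ and verify it in the Laplace domain, where the kernel transform collapses to a geometric series $c\,\mathcal{L}\{K\}(p)=r/(1-r)$ with $r=(c/(p+c))^{\rho}$, and the identity becomes the one-line computation $(1-r)/p\cdot(1-r)^{-1}=1/p$. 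Your route is shorter and more structural: it exposes (\ref{ll4}) as a resolvent-type equation and explains \emph{why} the $\rho$-exponential is the right kernel (it is exactly the function whose shifted Laplace transform is the geometric series $\sum_{k\geq 1}r^{k}/c$), whereas the paper's computation is self-contained real-variable algebra that never invokes Laplace uniqueness. The technical points you flag are indeed the right ones and are unproblematic: term-by-term transformation of the kernel series is justified by Tonelli since all terms are nonnegative (it suffices to work with real $p>0$, which is enough for uniqueness), and the same positivity justifies the convolution theorem. One small statement to tighten: boundedness of $K*g$ is asserted but not shown; either note that it follows by splitting $K$ into an integrable part near $0$ and a bounded tail (using $e_{\rho}^{z}\sim e^{z}/\rho$, so $K$ is bounded away from the origin) together with $g\in L^{1}\cap L^{\infty}$, or simply observe that at most polynomial growth (clear from $\int_{0}^{\tau}K\leq C(1+\tau)$) already suffices for the uniqueness theorem you invoke.
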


\begin{proof}
Let, for simplicity, $A:=\xi ^{2}/2,$ then we rewrite the integral in the
r.h.s. of (\ref{ll4}) as
\begin{eqnarray*}
&&\frac{1}{\Gamma (\rho )}\int_{0}^{t}e^{-A(t^{\alpha }-s^{\alpha })}\left(
A(t^{\alpha }-s^{\alpha })\right) ^{\rho -1}E_{\rho ,\rho }(A^{\rho
}(t^{\alpha }-s^{\alpha })^{\rho })s^{\alpha -1}\Gamma \left( \rho
,As^{\alpha }\right) ds \\
&=&[s=tw^{1/\alpha }] \\
&=&\frac{A^{\rho -1}t^{\alpha \rho }}{\alpha \Gamma (\rho )}
\int_{0}^{1}e^{-At^{\alpha }(1-w)}(1-w)^{\rho -1}E_{\rho ,\rho }(A^{\rho
}t^{\alpha \rho }(1-w)^{\rho })\Gamma \left( \rho ,At^{\alpha }w\right) dw.
\end{eqnarray*}
Thus the r.h.s. of (\ref{ll4}) reads
\begin{eqnarray*}
&&1-\frac{A^{\rho }t^{\alpha \rho }}{\Gamma (\rho )}\int_{0}^{1}e^{-At^{
\alpha }(1-w)}(1-w)^{\rho -1}E_{\rho ,\rho }(A^{\rho }t^{\alpha \rho
}(1-w)^{\rho })\Gamma \left( \rho ,At^{\alpha }w\right) dw \\
&=&[\text{by (\ref{gg})}] \\
&=&1-A^{\rho }t^{\alpha \rho }\int_{0}^{1}e^{-At^{\alpha }y}y^{\rho
-1}\sum_{j=0}^{\infty }\frac{(A^{\rho }t^{\alpha \rho }y^{\rho })^{j}}{
\Gamma (\rho j+\rho )}dy+ \\
&&+A^{\rho }t^{\alpha \rho }\int_{0}^{1}e^{-At^{\alpha }(1-w)}(1-w)^{\rho
-1}A^{\rho }t^{\alpha \rho }w^{\rho }e^{-At^{\alpha }w}\sum_{j=0}^{\infty }
\frac{(A^{\rho }t^{\alpha \rho }(1-w)^{\rho })^{j}}{\Gamma (\rho j+\rho )}
\sum_{l=0}^{\infty }\frac{(At^{\alpha }w)^{l}}{\Gamma (\rho +l+1)}dy \\
&=&1-A^{\rho }t^{\alpha \rho }\sum_{j=0}^{\infty }\frac{(A^{\rho }t^{\alpha
\rho })^{j}}{\Gamma (\rho j+\rho )}\sum_{l=0}^{\infty }\frac{
(-1)^{l}(At^{\alpha })^{l}}{l!}\int_{0}^{1}y^{l+\rho j+\rho -1}dy+ \\
&&+A^{2\rho }t^{2\alpha \rho }e^{-At^{\alpha }}\sum_{j=0}^{\infty }\frac{
(A^{\rho }t^{\alpha \rho })^{j}}{\Gamma (\rho j+\rho )}\sum_{l=0}^{\infty }
\frac{(At^{\alpha })^{l}}{\Gamma (\rho +l+1)}\frac{\Gamma (\rho +l+1)\Gamma
(\rho j+\rho )}{\Gamma (\rho j+2\rho +l+1)} \\
&=&1-\sum_{j=0}^{\infty }\frac{1}{\Gamma (\rho j+\rho )}\sum_{l=0}^{\infty }
\frac{(-1)^{l}(At^{\alpha })^{l+\rho j+\rho }}{l!(l+\rho j+\rho )}
+\sum_{j=0}^{\infty }e^{-At^{\alpha }}\sum_{l=0}^{\infty }\frac{(At^{\alpha
})^{\rho j+2\rho +l}}{\Gamma (\rho j+2\rho +l+1)} \\
&=&1-\sum_{j=0}^{\infty }\frac{\Gamma (\rho j+\rho )-\Gamma (\rho j+\rho
,At^{\alpha })}{\Gamma (\rho j+\rho )}+\sum_{j=0}^{\infty }\frac{\Gamma
(\rho j+2\rho )-\Gamma (\rho j+2\rho ,At^{\alpha })}{\Gamma (\rho j+2\rho )}
\end{eqnarray*}
where in the last step, we have applied (\ref{gg2}) for the second term and (%
\ref{gg}) for the last one. After a change of index in the second sum, we
easily obtain (\ref{ft}).
\end{proof}

\

Equation (\ref{ll4}) reduces, for $\rho =1$, to the equation satisfied by
the characteristic function of the fractional Brownian Motion, i.e.
\begin{equation}
\frac{\partial }{\partial t}u(\xi ,t)=-\frac{\alpha }{2}t^{\alpha -1}\xi
^{2}u(\xi ,t).  \label{fbm}
\end{equation}
On the other hand, for $\rho <1,$ it can be compared with that satisfied by
the characteristic function of the ggBm (see Proposition 4.1 in \cite{MUR});
in this case the presence of the variable $\xi $ also in the integral's
kernel does not allow to obtain, by the Fourier inversion, a master equation
for the density of the process, as happens for the ggBm.

We then provide an alternative result, which leads to the governing equation
of the marginal density of $B_{\alpha ,\rho }$. In this case, we will resort
to the equality in distribution (\ref{mol}).

\begin{theorem}
Let $\rho \in (0,1]$ and $\alpha \in (0,1]$. The density
\begin{equation}
f_{B_{\alpha ,\rho }}(x,t)=\frac{1}{\sqrt{4\pi t^{\alpha }}}
\int_{0}^{+\infty }\tau ^{-1/2}\exp \left\{ -\frac{x^{2}}{4\tau t^{\alpha }}
\right\} l_{\rho }(\tau )d\tau ,\qquad x\in \mathbb{R},\;t\geq 0,
\label{bb2}
\end{equation}
where $l_{\rho }(\cdot )$ is given in (\ref{ll})$,$ satisfies the following
integro-differential equation
\begin{equation}
\frac{\partial }{\partial t}f(x,t)=\frac{\alpha \rho }{t}\left[
f(x,t)-f(x,0) \right] +\frac{ \alpha }{2t}\frac{\partial ^{2}}{\partial x^{2}%
} \int_{0}^{t}z^{\alpha }\frac{\partial }{\partial z}f(x,z)dz,  \label{bb3}
\end{equation}
with initial conditions $f(x,0)=\delta (x)$ and $f(0,t)=0.$
\end{theorem}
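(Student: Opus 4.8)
The plan is to pass to Fourier space in $x$ and reduce the integro-differential equation (\ref{bb3}) to a single scalar identity for the characteristic function $\Phi_{\alpha,\rho}(\xi,t)=\Gamma(\rho,\xi^{2}t^{\alpha}/2)/\Gamma(\rho)$ already computed in (\ref{ft}). Since the density (\ref{bb2}) is exhibited as a Gaussian scale mixture, its Fourier transform in $x$ is exactly $\Phi_{\alpha,\rho}(\xi,t)$, and the initial datum $f(x,0)=\delta(x)$ corresponds to $\Phi_{\alpha,\rho}(\xi,0)=\Gamma(\rho,0)/\Gamma(\rho)=1$, consistent with $B_{\alpha,\rho}(0)=0$ in (\ref{mol}). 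Writing $A:=\xi^{2}/2$ and using that $\partial^{2}/\partial x^{2}$ becomes multiplication by $-\xi^{2}=-2A$, the transform of (\ref{bb3}) is
\begin{equation*}
\frac{\partial}{\partial t}\Phi(\xi,t)=\frac{\alpha\rho}{t}\big[\Phi(\xi,t)-1\big]-\frac{\alpha A}{t}\int_{0}^{t}z^{\alpha}\frac{\partial}{\partial z}\Phi(\xi,z)\,dz,
\end{equation*}
so the whole statement collapses to verifying this one equation.

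To verify it I would use $\frac{d}{du}\Gamma(\rho,u)=-e^{-u}u^{\rho-1}$, which gives $\frac{\partial}{\partial t}\Phi(\xi,t)=-\frac{\alpha A^{\rho}t^{\alpha\rho-1}}{\Gamma(\rho)}e^{-At^{\alpha}}$. On the right-hand side, denoting by $\gamma(\rho,\cdot)$ the lower incomplete gamma function (as in Section 5), one has $\Phi(\xi,t)-1=-\gamma(\rho,At^{\alpha})/\Gamma(\rho)$; and, after the substitution $u=Az^{\alpha}$ in the inner integral, $\int_{0}^{t}z^{\alpha}\,\partial_{z}\Phi(\xi,z)\,dz=-\gamma(\rho+1,At^{\alpha})/(A\Gamma(\rho))$. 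Collecting the two contributions, the right-hand side equals
\begin{equation*}
\frac{\alpha}{t\Gamma(\rho)}\big[\gamma(\rho+1,At^{\alpha})-\rho\,\gamma(\rho,At^{\alpha})\big].
\end{equation*}

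The computation then closes with the recurrence $\gamma(\rho+1,u)=\rho\,\gamma(\rho,u)-u^{\rho}e^{-u}$, which turns the bracket into $-(At^{\alpha})^{\rho}e^{-At^{\alpha}}$; substituting back reproduces exactly $-\frac{\alpha A^{\rho}t^{\alpha\rho-1}}{\Gamma(\rho)}e^{-At^{\alpha}}$, i.e. the left-hand side. Inverting the Fourier transform then yields (\ref{bb3}). I note that the subtraction $f(x,t)-f(x,0)$ is precisely what regularizes the $\alpha\rho/t$ term near $t=0$: the combination transforms to $-\frac{\alpha\rho}{t}\gamma(\rho,At^{\alpha})/\Gamma(\rho)$, which stays bounded by a multiple of $t^{\alpha\rho-1}$ as $t\to 0^{+}$ (using $\gamma(\rho,At^{\alpha})\sim(At^{\alpha})^{\rho}/\rho$), matching the order of the left-hand side.

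The main obstacle is analytic rather than algebraic: one must justify differentiating under the mixing integral in (\ref{bb2}) and interchanging the Fourier transform with both the $t$-derivative and the inner $z$-integral, despite the singular structure of $l_{\rho}$ at the boundary of its support and the fact that $B_{\alpha,\rho}$ has infinite moments of every order (as remarked after (\ref{mol})). My strategy to bypass this is to carry out all the manipulations at the level of the characteristic function — where everything is controlled by the smooth, bounded function $\Gamma(\rho,At^{\alpha})/\Gamma(\rho)$ — and to invert to the density only at the very end, treating the term $\frac{\alpha\rho}{t}f(x,0)$ distributionally through its transform $\frac{\alpha\rho}{t}$. The initial condition $f(x,0)=\delta(x)$ is then the identity $\Phi(\xi,0)=1$, while the stated boundary condition is read off directly from the scale-mixture representation (\ref{bb2}).
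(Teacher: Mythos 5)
Your proposal is correct and takes essentially the same approach as the paper: the Fourier transform in $x$ reduces (\ref{bb3}) to a scalar identity for $\Phi_{\alpha,\rho}(\xi,t)=\Gamma(\rho,\xi^{2}t^{\alpha}/2)/\Gamma(\rho)$, which is then verified via the upper/lower incomplete gamma decomposition $\Gamma(\rho)=\Gamma(\rho,x)+\gamma(\rho,x)$ and the recurrence $\gamma(\rho+1,x)=\rho\,\gamma(\rho,x)-x^{\rho}e^{-x}$. The only cosmetic difference is that the paper first integrates the inner integral by parts (producing its equation (\ref{bb4}) with $\int_{0}^{t}z^{\alpha-1}\widehat{f}(\xi,z)\,dz$), whereas you evaluate $\int_{0}^{t}z^{\alpha}\partial_{z}\Phi(\xi,z)\,dz$ directly by the substitution $u=Az^{\alpha}$; the two computations are equivalent.
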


\begin{proof}
Let $\widehat{f}(\xi ):=\int_{-\infty }^{+\infty }e^{i\xi x}f(x)dx$ denote
the Fourier transform, then by transforming (\ref{bb3}), w.r.t. $x,$ and
considering (\ref{ft}), we can check that $\Phi _{\alpha ,\rho }(\xi ,t)$
satisfies the following equation
\begin{equation}
\frac{\partial }{\partial t}\widehat{f}(\xi ,t)=\frac{\alpha \rho }{t}\left[
\widehat{f}(\xi ,t)-1\right] -\frac{\alpha }{2}t^{\alpha -1}\xi ^{2}
\widehat{f}(\xi ,t)+\frac{ \alpha ^{2}\xi ^{2}}{2t} \int_{0}^{t}z^{\alpha -1}%
\widehat{f}(\xi ,z)dz,  \label{bb4}
\end{equation}
where we have taken into account the initial condition, $\widehat{f}(\xi
,0)=1$. We then rewrite the r.h.s. of (\ref{bb4}) as follows:
\begin{eqnarray}
&&\frac{\alpha \rho }{t}\left[ \frac{\Gamma \left( \rho , \xi ^{2}t^{\alpha
}/2\right) }{\Gamma (\rho )}-1\right] -\frac{1 }{2} \alpha t^{\alpha -1}\xi
^{2}\frac{\Gamma \left( \rho , \xi ^{2}t^{\alpha }/2\right) }{\Gamma (\rho )}%
+\frac{ \alpha ^{2}\xi ^{2} }{2t}\int_{0}^{t}z^{\alpha -1}\frac{\Gamma
\left( \rho , \xi ^{2}z^{\alpha }/2\right) }{\Gamma (\rho )}dz  \label{bb5}
\\
&=&-\frac{\alpha \rho }{t\Gamma (\rho )}\gamma \left( \rho , \xi
^{2}t^{\alpha }/2\right) -\frac{1 }{2}\alpha t^{\alpha -1}\xi ^{2} \frac{%
\Gamma \left( \rho , \xi ^{2}t^{\alpha }/2\right) }{\Gamma (\rho )}+\frac{
\alpha \xi ^{2}}{2t\Gamma (\rho )}\int_{0}^{t^{\alpha }}\int_{ \xi
^{2}w/2}^{+\infty }e^{-y}y^{\rho -1}dydw  \notag \\
&=&-\frac{\alpha \rho }{t\Gamma (\rho )}\gamma \left( \rho , \xi
^{2}t^{\alpha }/2\right) -\frac{1}{2}\alpha t^{\alpha -1}\xi ^{2} \frac{%
\Gamma \left( \rho ,\xi ^{2}t^{\alpha }/2\right) }{\Gamma (\rho )}+\frac{%
\alpha }{t\Gamma (\rho )}\int_{0}^{ \xi ^{2}t^{\alpha }/2}e^{-y}y^{\rho }dy+
\notag \\
&&+\frac{ \alpha \xi ^{2}t^{\alpha -1}}{2\Gamma (\rho )}\int_{ \xi
^{2}t^{\alpha }/2}^{+\infty }e^{-y}y^{\rho -1}dy  \notag \\
&=&-\frac{\alpha }{t\Gamma (\rho )}\left[ \gamma \left( \rho +1, \xi
^{2}t^{\alpha }/2\right) \right] -\frac{\alpha }{t\Gamma (\rho )}\left(
\frac{ \xi ^{2}t^{\alpha }}{2}\right) ^{\rho }e^{-\xi ^{2}t^{\alpha }/2}+%
\frac{\alpha }{t\Gamma (\rho )}\gamma \left( \rho +1,\xi ^{2}t^{\alpha
}/2\right) ,  \notag
\end{eqnarray}
where we have applied the following well-known relationship between upper
incomplete and lower incomplete gamma functions $\Gamma \left( \rho \right)
=\Gamma \left( \rho ,x\right) +\gamma \left( \rho ,x\right) $ and the
recurrence formula
\begin{equation*}
\gamma \left( \rho +1,x\right) =\rho \gamma \left( \rho ,x\right) -x^{\rho
}e^{-x}
\end{equation*}
(see \cite{GRA}, p.951). It is now easy to check that (\ref{bb5}) coincides
with the first derivative of $\Phi _{\alpha ,\rho }(\xi ,t)$ (w.r.t. $t$)
and then equation (\ref{bb4}) holds.
\end{proof}

\

The knowledge of the p.d.e. governing the density (\ref{bb2}) can be then
used in order to simulate the trajectories of the tempered $\Gamma$-GBM.
Note that, for $\rho =1,$ equation (\ref{bb4}) reduces to (\ref{fbm}),
since, in this case $\widehat{f}(\xi ,t)=e^{-\xi ^{2}t^{\alpha }/2}$ and
thus $\int_{0}^{t}z^{\alpha -1}\widehat{f}(\xi ,z)dz=t^{\alpha }\widehat{f}%
(\xi ,t)/\alpha .$

\appendix

\section{Special functions}

We present here some definitions and results concerning special functions
that are needed in our analysis.

Let us recall the definition of the $H$\emph{-function} (see, for example,
\cite{MAT}, p.21):%
\begin{equation}
{\LARGE H}_{p,q}^{m,n}\left[ \left. z\right\vert
\begin{array}{c}
(a_{p},A_{p}) \\
(b_{q},B_{q})%
\end{array}%
\right] :=\frac{1}{2\pi i}\int_{\mathcal{L}}\frac{\left\{
\prod\limits_{j=1}^{m}\Gamma (b_{j}+B_{j}s)\right\} \left\{
\prod\limits_{j=1}^{n}\Gamma (1-a_{j}-A_{j}s)\right\} z^{-s}ds}{\left\{
\prod\limits_{j=m+1}^{q}\Gamma (1-b_{j}-B_{j}s)\right\} \left\{
\prod\limits_{j=n+1}^{p}\Gamma (a_{j}+A_{j}s)\right\} },
\end{equation}%
with $z\neq 0,$ $m,n,p,q\in \mathbb{N}_{0},$ for $0\leq m\leq q$, $0\leq
n\leq p$, $a_{j},b_{j}\in \mathbb{R},$ for $i=1,...,p,$ $j=1,...,q$ and $%
\mathcal{L}$ is a contour such that the following condition is satisfied%
\begin{equation}
(b_{j}+\alpha )\neq (a_{l}-k-1),\qquad j=1,...,m,\text{ }l=1,...,n,\text{ }%
\alpha ,k=0,1,...  \label{1.6}
\end{equation}%
We need the following well-known properties of the H-function.

\begin{enumerate}[label=\textbf{P\arabic*}]
\item For any $z\neq 0,$ we have that%
\begin{equation}
{\LARGE H}_{p,q}^{m,n}\left[ \left. z\right\vert
\begin{array}{c}
(a_{p},A_{p}) \\
(b_{q},B_{q})%
\end{array}%
\right] ={\LARGE H}_{q,p}^{n,m}\left[ \left. \frac{1}{z}\right\vert
\begin{array}{c}
(1-b_{q},B_{q}) \\
(1-a_{p},A_{q})%
\end{array}%
\right]  \notag
\end{equation}%
(see equation (1.58) in \cite{MAT}).

\item For any $\sigma \in \mathbb{C},$ we have that%
\begin{equation}
z^{\sigma }{\LARGE H}_{p,q}^{m,n}\left[ \left. z\right\vert
\begin{array}{c}
(a_{p},A_{p}) \\
(b_{q},B_{q})%
\end{array}%
\right] ={\LARGE H}_{p,q}^{m,n}\left[ \left. \frac{1}{z}\right\vert
\begin{array}{c}
(a_{p}+\sigma A_{p},A_{p}) \\
(b_{q}+\sigma B_{q},B_{q})%
\end{array}%
\right]  \notag
\end{equation}%
(see equation (1.60) in \cite{MAT}).
\end{enumerate}

We recall that the \emph{Meijer G-function} is a special case of the
H-function (see \cite{KAR}), i.e.
\begin{equation}
{\LARGE G}_{p,q}^{m,n}\left[ \left. z\right\vert
\begin{array}{c}
(a_{1},...a_{p}) \\
(b_{1},...,b_{q})%
\end{array}%
\right] ={\LARGE H}_{p,q}^{m,n}\left[ \left. z\right\vert
\begin{array}{ccc}
(a_{1},1) & ... & (a_{p},1) \\
(b_{1},1) & ... & (b_{q},1)%
\end{array}%
\right] ,  \label{mei}
\end{equation}%
and that the function ${\LARGE G}_{p,p}^{p,0}\left[ \left. x\right\vert \;%
\right] $ vanishes, for any $|x|>1$, $p\in \mathbb{N}$ (see \cite{KAR},
Property 3).

\

Let us consider the \emph{upper-incomplete gamma function}, defined as $%
\Gamma (\rho ,x):=\int_{x}^{+\infty }e^{-t}t^{\rho -1}dt.$ We recall its
following series representations%
\begin{equation}
\Gamma (\rho ,x)=\Gamma (\rho )\left( 1-x^{\rho }e^{-x}\sum_{j=0}^{\infty }%
\frac{x^{j}}{\Gamma (\rho +j+1)}\right)  \label{gg}
\end{equation}%
and
\begin{equation}
\Gamma (\rho ,x)=\Gamma (\rho )-\sum_{j=0}^{\infty }\frac{(-1)^{j}x^{\rho +j}%
}{j!(\rho +j)},  \label{gg2}
\end{equation}%
for $x>0$ and $\rho \neq 0,-1,-2,...$ (see \cite{ABR}).

\

Finally, we recall the definition of the \emph{Mittag-Leffler function with
three parameters} (also called Prabhakar function), for any $x\in \mathbb{C}$%
,%
\begin{equation*}
E_{\alpha ,\beta }^{\gamma }\left( x\right) :=\sum_{j=0}^{\infty }\frac{%
(\gamma )_{j}x^{j}}{j!\Gamma (\alpha j+\beta )},\qquad \alpha ,\beta ,\gamma
\in \mathbb{C}\text{, }\func{Re}(\alpha )>0,
\end{equation*}%
where $(\gamma )_{j}:=\Gamma (\gamma +j)/\Gamma (\gamma ),$ together with
the $n$-order differentiation formula (see \cite{GOR} and \cite{GAR}, for
details), for any $n\in \mathbb{N}$, $\lambda \in \mathbb{C}$, $x\in \mathbb{%
R}^{+}:$%
\begin{equation}
\frac{d^{n}}{dx^{n}}x^{\beta -1}E_{\alpha ,\beta }^{\gamma }\left( \lambda
x^{\alpha }\right) =x^{\beta -n-1}E_{\alpha ,\beta -n}^{\gamma }\left(
\lambda x^{\alpha }\right) .  \label{di}
\end{equation}%
Moreover, it is proved in \cite{MAI} that the Prabhakar function is
completely monotone on $\mathbb{R}^{+}$ (i.e. $f(\cdot )=E_{\alpha ,\beta
}^{\gamma }\left( \cdot \right) $ is infinitely differentiable and such that
$f:(0,+\infty )\rightarrow \mathbb{R}$ with $(-1)^{k}f^{(k)}(x)\geq 0$ for
any $k\in \mathbb{N}$, $x>0$) for the parameters inside the following
ranges: $0<\alpha \leq 1$ and $0<\alpha \gamma \leq \beta \leq 1$.

\section{Holomorphic property on locally convex spaces}

We recall some definitions and theorems on complex analysis in infinite
dimensional convex spaces, for further details see \cite{DIN}. We define
here the complexification of a real Hilbert space as a direct sum $\mathcal{H%
}_{\mathbb{C}}=\mathcal{H}\oplus i \mathcal{H}=\{\xi_1 + i \xi_2 |
\xi_1,\xi_2 \in \mathcal{H}\}$.

\begin{definition}
Given a real Hilbert space $\mathcal{H}$, the scalar product in the
complexification $\mathcal{H}_{\mathbb{C}}$ can be rewritten by using the
bilinear extension of the scalar product in $\mathcal{H}$:
\begin{equation*}
\langle h,g\rangle_{\mathcal{H}_{\mathbb{C}}}=\langle \bar{h},g \rangle_{%
\mathcal{H}} \qquad \text{for } h,g \in \mathcal{H}_{\mathbb{C}}
\end{equation*}
\end{definition}

\begin{definition}
Let be $E$ a vector space on $\mathbb{C}$. $U$ is said ``finitely open" if $%
U \cap F$ is open w.r.t. the Euclidean topology on $F$, for each finite
dimensional subspace $F$ of $E$.
\end{definition}

\begin{definition}
Let $E$ be a vector space on $\mathbb{C}$, $U \subset E$ a finitely open
subset and $F$ a locally convex space. A function $f: U \subset E \to F$ is
said ``Gateaux" or ``G-holomorphic" if $\forall \xi \in U$, $\forall \eta
\in E$ and $\phi \in F^{\prime}$, the function $\mathbb{C} \ni \lambda \to
\phi(f(\xi + \lambda\eta)) \in \mathbb{C}$ is holomorphic on some
neighborhood of $0$ in $\mathbb{C}$.
\end{definition}

Note that we will apply this definition to functions in $\mathbb{C}$, so we
have that $F^{\prime}=\mathbb{C}$, so it is sufficient to check the
holomorphic property on $f$ itself. The following lemma is useful in the
proof of Theorem \ref{ThmLapHolo}, for further details see \cite{KSWY98}.

\begin{lem}
\label{Gholobound} Let $U \subset \mathcal{S}_{\mathbb{C}}$ be open and $f:
U \to \mathbb{C}$. Then $f$ is holomorphic, if and only if it is
G-holomorphic and locally bounded, i.e. each point $\xi \in U$ has a
neighborhood whose image under $f$ is bounded.
\end{lem}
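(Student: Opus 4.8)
The plan is to prove the two implications separately, the forward one being essentially a matter of unwinding definitions and the converse being the substantive part. First I would dispose of the easy direction: if $f$ is holomorphic on $U$, then by definition it is continuous, hence automatically locally bounded, and its restriction to each complex line $\lambda \mapsto f(\xi + \lambda\eta)$ is holomorphic in one variable (directly from the local power-series representation of $f$), which is exactly the G-holomorphy required.

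For the converse, assume $f$ is G-holomorphic and locally bounded, fix $\xi_0 \in U$, and choose a balanced (circled) open neighborhood $B$ of $0$ in $\mathcal{S}_{\mathbb{C}}$ such that $f$ is bounded by some constant $M$ on $\xi_0 + B$. Since $B$ is balanced, for each $\eta \in B$ the map $\lambda \mapsto \xi_0 + \lambda\eta$ sends the closed unit disk $\{\lambda : |\lambda| \le 1\}$ into $\xi_0 + B$, so the G-holomorphic one-variable function $g_{\eta}(\lambda) := f(\xi_0 + \lambda\eta)$ is holomorphic on a neighborhood of this disk and bounded there by $M$. I would then extract its Taylor coefficients by the Cauchy integral
\begin{equation*}
P_{n}(\eta) := \frac{1}{2\pi i}\oint_{|\lambda|=1}\frac{f(\xi_0 + \lambda\eta)}{\lambda^{n+1}}\, d\lambda, \qquad n = 0,1,2,\dots,
\end{equation*}
and verify that each $P_n$ is a continuous $n$-homogeneous polynomial on $\mathcal{S}_{\mathbb{C}}$. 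Homogeneity $P_n(c\eta) = c^{n}P_n(\eta)$ follows from the change of variable $\lambda \mapsto \lambda/c$ in the integral, while the polynomial character is obtained via the polarization identity, which exhibits $P_n$ as the restriction of a symmetric $n$-linear form; local boundedness enters here through the Cauchy estimate $|P_n(\eta)| \le M$ valid for all $\eta \in B$, which is precisely what makes that multilinear form continuous.

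It then remains to show that the series of homogeneous components reconstructs $f$ locally and uniformly. For $\eta$ in the smaller neighborhood $\tfrac{1}{2}B$, the estimate $|P_n(\eta)| = 2^{-n}|P_n(2\eta)| \le 2^{-n}M$ (valid since $2\eta \in B$) dominates $\sum_{n} P_n(\eta)$ by the convergent geometric series $\sum_{n} 2^{-n}M$, giving absolute and uniform convergence on $\xi_0 + \tfrac{1}{2}B$; comparing with the Taylor expansion of $g_{\eta}$ at $\lambda = 0$ identifies the sum with $f(\xi_0 + \eta)$. This local power-series representation is exactly the definition of holomorphy, so $f$ is holomorphic on $U$.

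The main obstacle I anticipate is not the convergence but the middle step — certifying that the Cauchy coefficients $P_n$ are genuine continuous polynomials rather than merely functions homogeneous along lines. This is the point at which local boundedness is indispensable: without it the $P_n$ need not be continuous and the line-wise holomorphy carries no global content. I would handle it by combining the polarization formula with the uniform Cauchy estimate on $B$, as indicated, which is the route followed in \cite{DIN} and \cite{KSWY98}.
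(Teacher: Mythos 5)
First, a point of comparison: the paper does not prove this lemma at all --- it is stated in Appendix B as a known result, quoted from the literature with pointers to \cite{DIN} and \cite{KSWY98}, and used as a black box in the proof of Theorem \ref{ThmLapHolo}. So there is no internal proof to measure your argument against; what you have written is a reconstruction of the standard proof in those references (forward direction by unwinding definitions; converse by one-variable Cauchy integrals along complex lines inside a balanced neighborhood, Cauchy estimates from the local bound, and uniform convergence of the homogeneous expansion on a shrunken neighborhood). That strategy is the right one, and the convergence part of your argument is sound: for $\eta\in\tfrac{1}{2}B$ one has $2\eta\in B$, hence $|P_n(\eta)|\le 2^{-n}M$, and the identification of $\sum_n P_n(\eta)$ with $f(\xi_0+\eta)$ is legitimate because $g_\eta$ is holomorphic on a disk of radius larger than $1$ ($B$ being open and balanced), so its Taylor series at $0$ converges at $\lambda=1$.

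The one step that is not closed as written is exactly the one you flag: the polynomial character of $P_n$. Your change of variables yields only ray-homogeneity, $P_n(c\eta)=c^nP_n(\eta)$, and polarization applied to a function that is merely homogeneous along rays does not produce a multilinear form --- homogeneity carries no additivity. The missing ingredient is that the restriction of $f$ to every finite-dimensional affine section of $U$ is \emph{jointly} holomorphic; this follows from G-holomorphy together with local boundedness by Osgood's lemma (iterated one-variable Cauchy integrals and uniform convergence), or from Hartogs' theorem. Once that is in hand, each $P_n$ is a genuine $n$-homogeneous polynomial on every finite-dimensional subspace, polarization then yields the symmetric $n$-linear form, and your uniform Cauchy estimate $|P_n|\le M$ on $B$ upgrades algebraic polynomiality to continuity. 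Since you explicitly identified this as the main obstacle and pointed to \cite{DIN} and \cite{KSWY98}, where precisely this step is carried out, the defect is one of execution rather than conception; but a self-contained proof must include the finite-dimensional joint-holomorphy argument, and ``polarization plus Cauchy estimates'' alone does not substitute for it.
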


\

\section*{Acknowledgments}  The research by L.B. and L.C. was partially carried over during a visiting period
at the Isaac Newton Institute in Cambridge, whose support is gratefully acknowledged. L.B.
was supported, in particular, by the Kirk Distinguished Fellowship, awarded by
the same institute.

\

\section*{Declarations}

\textbf{Ethical Approval} 
not applicable
\\
\textbf{Competing interests} 
 The authors have no competing interests to declare that are relevant to the content of this article.
 \\
\textbf{Authors' contributions} 
The paper is the outcome of a joint effort and all authors have contributed significantly to every section of the work.
\\
\textbf{Funding} 
not applicable
\\
\textbf{Availability of data and materials}
not applicable

\

\end{document}